\newtheorem{Thm}{Theorem}[section]
\newtheorem{Prop}[Thm]{Proposition}
\theoremstyle{definition}
\newtheorem{Rem}[Thm]{Remark}
\DeclareMathOperator{\re}{Re}
\DeclareMathOperator{\sgn}{sgn}
\DeclareMathOperator{\Tr}{Tr}
\DeclareMathOperator{\rank}{rank}
\DeclareMathOperator{\ord}{ord}
\DeclareMathOperator{\diag}{diag} % <- added by Gunji
\numberwithin{equation}{section}
\newcommand{\Q}{\mathbb{Q}}
\newcommand{\C}{\mathbb{C}}
\newcommand{\Z}{\mathbb{Z}}
\newcommand{\hh}{\mathbb{H}}
\newcommand{\bfon}{{\boldsymbol 1}} % <- added by Gunji
\newcommand{\h}{\mathbb{H}}
\begin{document}

\title{On Siegel--Eisenstein series of level $p$ and their $p$-adic properties}
\author{Siegfried B\"ocherer, Keiichi Gunji and Toshiyuki Kikuta}
\maketitle

\noindent
{\bf 2020 Mathematics subject classification}: Primary 11F33 $\cdot$ Secondary 11F46\\
\noindent
{\bf Key words}: Siegel--Eisenstein series, Fourier coefficients, $p$-adic Eisenstein series. 

\begin{abstract}
  We construct a Siegel--Eisenstein series of level $p$ with a quadratic character mod $p$ which is
  a $U(p)$-eigenfunction with eigenvalue $1$, 
and calculate its Fourier coefficients explicitly. 
We show that this Siegel--Eisenstein series is a $p$-adic Siegel--Eisenstein series, i.e., 
it is a $p$-adic limit of a sequence of Siegel--Eisenstein series of level $1$.  
We prove also that the Siegel--Eisenstein series with a nonquadratic character mod $p$
constructed by Takemori is also a $p$-adic Siegel--Eisenstein series.  
\end{abstract}

\section{Introduction}
$p$-Adic limits of Eisenstein series are formally defined by
a Fourier series, whose coefficients are uniform $p$-adic limits of
the Fourier coefficients for the sequence of Eisenstein series
in question. One may ask whether such a formal series actually defines
a classical modular form. In most works on this subject, it is an issue
how to ``guess'' a candidate for such a classical modular form.
In our papers \cite{Bo-Ki1,Bo-Ki3} the candidate
came up naturally as a linear combination of
genus theta series for quadratic forms of level $p$ (after some work!).
By Siegel's theorem this candidate can then be rewritten as a
Siegel--Eisenstein series of level $p$ and one can show that it automatically
has to be an eigenfunction for the Hecke operator $U(p)$ with eigenvalue $1$.

The first aim of this paper is to provide a direct
construction of  such a Siegel--Eisenstein series of level $p$ with a quadratic character mod $p$, 
which is $U(p)$-eigenfunction with eigenvalue $1$. 
This is accomplished by using the functional equation between 
two Eisenstein series corresponding to some cusps, presented by \cite{Gu}.
This method also allows us to give its Fourier coefficients completely explicitly.

The second aim of this paper is to show that a simple comparison of the
Fourier coefficients of $p$-adic limits of the Eisenstein series (of level $1$)
on one hand and
with those of the level $p$ Eisenstein series with $U(p)$-eigenvalue $1$ on the other hand
give the most direct approach to
the problem described above (``guessing a candidate'') for many cases, in fact in a surprisingly simple manner. 
We will later (Remark \ref{sec:comp}) comment on the strength and applicability of the
different methods.

\paragraph{Notation} 
Let $n$ be a positive integer,
$\hh_{n}$ the Siegel upper half space of degree $n$,
and $\Gamma _n$ the Siegel modular group of degree $n$. 
Let $N$ be a positive integer and $\Gamma _0^{(n)}(N)$ the congruence subgroup of $\Gamma _n$ defined as 
\begin{align*}
&\Gamma _0^{(n)}(N):=\left\{ \begin{pmatrix}A & B \\ C & D \end{pmatrix}\in \Gamma _n \: \Big| \: C\equiv 0_n \bmod{N} \right\}.
\end{align*}

For a positive integer $k$ and a Dirichlet character 
$\chi $ mod $N$, we denote by $M_k(\Gamma _0^{(n)}(N),\chi )$
the space of Siegel modular forms of weight $k$ with character $\chi$ for $\Gamma _0^{(n)}(N)$. 
When $\chi = {\boldsymbol 1}_N$ (trivial character mod $N$), 
we write simply $M_k(\Gamma _0^{(n)}(N))$ for $M_k(\Gamma _0^{(n)}(N),{\boldsymbol 1}_N)$.

Any $F \in M_k(\Gamma _0^{(n)}(N), \chi )$ has a Fourier expansion of the form
\[
F(Z)=\sum_{0\leq T\in\Lambda_n}a_F(T)q^T,\quad q^T:=e^{2\pi i {\rm Tr}(TZ)},
\quad Z\in\mathbb{H}_n,
\]
where
\[
\Lambda_n
:=\{ T=(t_{ij})\in {\rm Sym}_n(\mathbb{Q})\;|\; t_{ii},\;2t_{ij}\in\mathbb{Z}\}.
\]

For the Riemann zeta function $\zeta(s)$ or the Dirichlet $L$-function $L(s,\chi)$ with character $\chi$, we denote by $\zeta^{(p)}(s)$ or $L^{(p)}(s,\chi)$ the functions removing the Euler $p$-factors for a prime $p$, that is 
\[ \zeta^{(p)}(s) = (1-p^{-s}) \zeta(s), \quad L^{(p)}(s, \chi) = (1-\chi(p) p^{-s})L(s,\chi). \]
For Dirichlet characters $\psi$ and $\chi$, we denote simply by $\psi \chi$ the primitive character arising from $\psi \cdot \chi$.

%%%%%%%%%%%%%%%%%%%%%%%%%%%%%%%%%%%%%%%%%%%%%%%
%\section{Preliminaries}
%\label{Sec:2}
%\subsection{Siegel modular forms}
%\label{sec:siegel-modular-forms}

%For a subring $R$ of $\mathbb{C}$, we denote by $M_{k}(\Gamma _0^{(n)}(N), \chi )_{R}$ 
%the $R$-module consisting of all $F\in M_{k}(\Gamma _0^{(n)}(N), \chi )$ satisfying $a_F(T)\in R$ for all $T\in \Lambda _n$. 
%We write simply $M_{k}(\Gamma _0^{(n)}(N))_{R}$ for  
%$M_{k}(\Gamma _0^{(n)}(N), {\boldsymbol 1}_N)_{R}$. 
%We denote by $\Lambda _n^+$ the set of all positive definite elements of $\Lambda _n$. 

%%%%%%%%%%%%%%%%%%%%%%%%%%%%%%%%%%%%%%%%%
\section{Construction of Siegel--Eisenstein series}
\label{sec:Eisen}

In this section we construct a holomorphic Siegel--Eisenstein series of
level $\Gamma_0(p)$, which is a $U(p)$-eigenfunction with eigenvalue $1$.
We quote some results from \cite{Gu}.

\subsection{Siegel--Eisenstein series and the  $U(p)$-operator}
Let $p$ be an odd prime number. We denote the non-trivial quadratic character modulo $p$ by $\chi_p$. Let $\psi = {\boldsymbol 1}_p$ or $\chi_p$. For an integer $k$ such that $\psi(-1) = (-1)^k$ we define the Siegel--Eisenstein series of degree $n$, level $p$ and weight $k$ with character $\psi$ attached to cusp $w_n = \begin{pmatrix} 0 & -1_n \\ 1_n & 0 \end{pmatrix}$ by
\[
E^{n,(n)}_{k,\psi}(Z,s)
 = (\det Y)^{s-k/2} \sum_{\gamma \in \Gamma_\infty \backslash \Gamma_\infty w_n \Gamma_0^{(n)}(p)} \psi(\det C)\det(CZ+D)^{-k} |\det(CZ+D) |^{k-2s}.
\]
for $Z \in \h_n$ and $s \in \C$. Here we write $\gamma = \begin{pmatrix} A & B \\ C & D \end{pmatrix}$ as usual. The right hand side converge for $\re(2s)> n+1$ and it continues meromorphically to whole $s$-plane.  If $k > n+1$, we have $E^{n,(n)}_{k,\psi}(Z,k/2) \in M_k(\Gamma_0^{(n)}(p), \psi)$. 

Let $\mathcal{E}_{k,s}(\Gamma^{(n)}_0(p),\psi)$ be the space of Siegel--Eisenstein series of weight $k$, with complex parameter $s$ (cf.\ \cite[\S\S 2.1]{Gu}). Then the Hecke operator $U(p)$ acts on $\mathcal{E}_{k,s}(\Gamma^{(n)}_0(p),\psi)$ as follows. For $f = \sum_{T \in \Lambda_n} b(T,Y,s)e^{2 \pi i \mathrm{tr}(TX)} \in \mathcal{E}_{k,s}(\Gamma_0^{(n)}(p),\psi)$, we have
\[ U(p)f = \sum_{T \in \Lambda_n} b(pT,p^{-1}Y,s) e^{2 \pi i \mathrm{tr}(TX)}. \]
Then by \cite[Proposition 2.10]{Gu}, $E^{n,(n)}_{k,\psi}(Z,s)$ is a $U(p)$-eigenfunction with the eigenvalue $p^{n(k/2-s)+2sn-n(n+1)/2}$. We know the existence and the uniqueness up to scalar of such a function by \cite[Corollary 2.5]{Gu}.

Let $E^{n,(0)}_{k,\psi}(Z,s) \in \mathcal{E}_{k,s}(\Gamma^{(n)}_0(p),\psi)$ be the $U(p)$-eigenfunction with eigenvalue $p^{n(k-s/2)}$, normalized so that the constant term of the Fourier expansion is $\det(Y)^{s/2-k}$. Then there is a functional equation between $E^{n,(0)}_{k,\psi}(Z,s)$ and $E^{n,(n)}_{k,\psi}(Z,(n+1)/2-s)$ as follows.

We put
\[ \delta = \begin{cases} 0 & \text{if $k$ is even,} \\ 1 & \text{if $k$ is odd,} \end{cases} \quad \varepsilon_p = \begin{cases} 1 & \text{if $p \equiv 1 \bmod 4$,} \\ i &  \text{if $p \equiv 3 \bmod 4$.} \end{cases} \]
We define the completed Riemann zeta function and the Dirichlet $L$-function by
\[ \xi(s) = \pi^{-s/2} \Gamma \left(\frac{s}{2} \right) \zeta(s), \quad \xi(s, \chi_p) = \left(\frac{p}{\pi} \right)^{(s+\delta)/2} \Gamma \left(\frac{s+\delta}{2} \right) L(s, \chi_p). \]
Then $\xi(1-s) = \xi(s)$ and $\xi(1-s, \chi_p) = \xi(s,\chi_p)$ holds. By abuse of language, we understand $\xi(s,\psi) = \xi(s)$ for $\psi = \bfon_p$. We set $\Gamma_m(s) = \pi^{m(m-1)/4} \prod_{i=0}^{m-1} \Gamma(s-i/2$).

Now we put
\begin{align*}  \mathbb{E}_{k,\psi}^{n,(0)}(Z,s) & = \frac{\Gamma_n\left(s + \dfrac{k}{2} \right)}{\Gamma_n \left( s+ \dfrac{\delta}{2} \right) } \xi(2s,\psi) \prod_{j=1}^{[n/2]} \xi(4s-2j) E_{k,\psi}^{n,(0)}(Z,s), \\
\mathbb{E}_{k,\psi}^{n,(n)}(Z,s) & = \gamma_n(s,\psi) \frac{\Gamma_n\left(s + \dfrac{k}{2} \right)}{\Gamma_n \left( s + \dfrac{\delta}{2} \right)} \xi(2s,\psi) \prod_{j=1}^{[n/2]} \xi(4s-2j) E_{k,\psi}^{n,(n)}(Z,s),
\end{align*}
here
\begin{align*} 
\gamma_n(s, \bfon_p) & = \frac{1-p^{-2s}}{1-p^{n-2s}} \prod_{i=1}^{[n/2]} \frac{1-p^{2i-4s}}{1-p^{2n+1-2i-4s}}, \\
\gamma_n(s,\chi_p) & = (\varepsilon_p p^{-1/2})^n \prod_{i=1}^{[n/2]} \frac{1-p^{2i-4s}}{1-p^{2n+1-2i-4s}}. 
\end{align*}
Then by \cite[Theorem 4.3, Theorem 4.8]{Gu} the following functional equation holds.
\[ \mathbb{E}^{n,(0)}_{k,\psi}(Z,s) = \mathbb{E}^{n,(n)}_{k,\psi} \left( Z, \frac{n+1}{2}- s \right). \]
Thus we can write
\begin{align} \label{eq:En(0)}
E^{n,(0)}_{k,\psi}(Z,s) & = \gamma_n \Bigl(\frac{n+1}{2}-s,\psi \Bigr) \frac{\Gamma_n \Bigl(s + \dfrac{\delta}{2} \Bigr) \Gamma_n \Bigl( \dfrac{n+1}{2} -s + \dfrac{k}{2} \Bigr)}{\Gamma_n\Bigl(s + \dfrac{k}{2} \Bigr) \Gamma_n \Bigl( \dfrac{n+1+\delta}{2}-s \Bigr)}  \\
& \times \frac{\xi(n+1-2s, \psi)}{\xi(1-2s, \psi )} \prod_{j=1}^{[n/2]} \frac{\xi(2n+2-4s-2j)}{\xi(1-4s+2j)} E^{n,(n)}_{k,\psi} \Bigl(Z, \frac{n+1}{2}-s\Bigr), \notag
\end{align}
here we use $\xi(2s,\psi) = \xi(1-2s, \psi)$ and $\xi(4s-2j) = \xi(1-4s+2j)$. 

In this section we shall show the following.

\begin{Thm} \label{thm:low weight Eisen}
For any positive integer $k$ such that $\psi(-1) = (-1)^k$, $E^{n,(0)}_{k,\psi}(Z,k/2)$ is a non-zero holomorphic Siegel modular form of level $p$, weight $k$ with character $\psi$. It is a $U(p)$-eigenfunction with eigenvalue $1$.
\end{Thm}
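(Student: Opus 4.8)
The plan is to separate the convergent range $k>n+1$, where the statement is essentially classical, from the low-weight range $k\le n+1$, which carries all the difficulty, and to treat the three assertions (holomorphy, non-vanishing, and the $U(p)$-eigenvalue) in that light. For $k>n+1$ the defining series of $E^{n,(0)}_{k,\psi}(Z,k/2)$ converges absolutely; at $s=k/2$ the factor $|\det(CZ+D)|^{k-2s}$ is trivial, so one obtains an honest holomorphic Eisenstein series whose constant term equals $1$, and nothing remains except the eigenvalue. Hence I would concentrate on $k\le n+1$, where $E^{n,(0)}_{k,\psi}(Z,s)$ is only reachable by meromorphic continuation in $s$ and must then be specialized at $s=k/2$.

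For the $U(p)$-eigenvalue I would first pin down the eigenvalue of the family $E^{n,(0)}_{k,\psi}(\cdot,s)$ at $s=k/2$ and then specialize. The functional equation \eqref{eq:En(0)} identifies $E^{n,(0)}_{k,\psi}(\cdot,k/2)$, up to a scalar, with $E^{n,(n)}_{k,\psi}\bigl(\cdot,\tfrac{n+1-k}{2}\bigr)$; inserting the parameter $\tfrac{n+1-k}{2}$ into the $E^{n,(n)}$-eigenvalue exponent $n(k/2-s)+2sn-n(n+1)/2$ yields $0$, so this eigenvalue is $p^{0}=1$. Because $U(p)$ acts on Fourier coefficients by $b(T,Y,s)\mapsto b(pT,p^{-1}Y,s)$, the eigenvalue relation is compatible with fixing $s$, so the specialization inherits eigenvalue $1$. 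The step becomes rigorous once that specialization is known to be a nonzero holomorphic form.

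The heart of the matter is holomorphy at $s=k/2$, and here the functional equation is of no help: for $k\le n+1$ both $s=k/2$ and $\tfrac{n+1-k}{2}$ lie outside $\re(2s)>n+1$. Instead I would analyze the explicit Fourier expansion of $E^{n,(0)}_{k,\psi}(Z,s)$, whose $T$-th coefficient is a product of an archimedean confluent hypergeometric function of $(T,Y,s)$ (in Shimura's sense) and an arithmetic factor assembled from Siegel series together with the $\psi$-twisted local density at $p$. The completed series $\mathbb{E}^{n,(0)}_{k,\psi}(Z,s)$ is normalized precisely so that its $\Gamma_n$-, $\xi$- and $\gamma_n$-factors absorb the poles of these ingredients; the first task is therefore the bookkeeping of dividing these factors back out and checking that the net quotient is finite and nonzero at $s=k/2$. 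The decisive analytic input is then the special-value behavior of the archimedean factor: at $s=k/2$ the governing $\Gamma_n$-quotient forces the coefficient of every $T$ that fails to be positive semidefinite to vanish, while for $T\ge 0$ the confluent hypergeometric function reduces to a constant multiple of the holomorphic exponential $q^{T}$. This simultaneously shows that $s=k/2$ is not a pole and that the value is a genuine holomorphic modular form, supported on $T\ge 0$.

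Non-vanishing then falls out: the identity coset contributes the constant Fourier coefficient, which the chosen normalization makes equal to $1$ at $s=k/2$, so $E^{n,(0)}_{k,\psi}(Z,k/2)$ cannot be the zero form; this in turn shows the scalar in \eqref{eq:En(0)} is nonzero and closes the eigenvalue argument. I expect the main obstacle to be the holomorphy step, and within it the precise tracking of the zeros and poles of the $\Gamma_n$- and $\xi$-factors at $s=k/2$, carried out uniformly across the weight ranges---most delicately the singular weights $k\le n/2$, where the form is supported on degenerate $T$ and the naive coefficient formula itself degenerates, as opposed to the generic range $k>n/2$. It is exactly there that one must both exclude an unwanted pole of the normalized series and confirm the vanishing of all non-holomorphic contributions.
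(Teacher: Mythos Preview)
Your overall strategy matches the paper's: analyze the Fourier expansion coefficient by coefficient, show vanishing for $T\not\ge 0$, and read off holomorphy, non-vanishing, and the eigenvalue. But there is a genuine gap in the step you call the ``decisive analytic input.'' You assert that at $s=k/2$ the governing $\Gamma_n$-quotient in the archimedean factor alone kills every coefficient indexed by a $T$ that is not positive semidefinite. This is not true. If $T$ has signature $(r-t,t)$ with $t\ge 1$, the extra Gamma quotient arising from Shimura's formula \eqref{eq:Xi_n}---the paper's $R(s)$, built from $\Gamma_{n-t}$ and $\Gamma_{n-r+t}$---does vanish at $s=k/2$ for most pairs $(r,t)$, but \emph{not} when $t=1$ and $r$ is even. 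In that borderline case the archimedean factor survives, and the vanishing of the Fourier coefficient is an \emph{arithmetic} fact: $\sgn T_1=(r-1,1)$ forces $\chi_{T_1}(-1)=(-1)^{r/2+1}$, so the $L$-value $L^{(p)}(r/2+1-k,\chi_{T_1}\psi)$ appearing in the Siegel-series factor sits at a point of the wrong parity for the character and vanishes (with a separate elimination of the case $r=2k$ via $\chi_{T_1}\psi\neq\bfon$). Without this parity argument your proof of holomorphy does not close.

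A secondary point: you declare the functional equation ``of no help'' for holomorphy and propose to work with a direct Fourier expansion of $E^{n,(0)}_{k,\psi}$ involving a $\psi$-twisted local density at $p$. The paper does the opposite. It uses \eqref{eq:En(0)} to replace $E^{n,(0)}_{k,\psi}(Z,s)$ by an explicit scalar times $E^{n,(n)}_{k,\psi}(Z,\tfrac{n+1}{2}-s)$, whose Fourier expansion involves only Siegel series $S^n_q$ for $q\ne p$, and then tracks the scalar's $\Gamma$- and $\xi$-factors through to $s=k/2$ via the simplified product \eqref{eq:G_nr}. This is exactly what makes the bookkeeping tractable and exposes the clean $L$-value whose parity you need; a direct expansion of $E^{n,(0)}$ would introduce a genuine $p$-factor that you would also have to control, and the same borderline $(t,r)=(1,\text{even})$ case would still have to be handled arithmetically.
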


%\begin{Rem} This theorem has already been mentioned by B\"ocherer and Schmidt at least for the case of $k \ge \dfrac{n+1}{2}$ (\cite[Proposition 5.2]{Bo-Sch}). We shall explain more precisely, and give Fourier coefficients explicitly.
%\end{Rem}

In order to prove the theorem, we shall study the Fourier expansion of $E^{n,(0)}_{k,\psi}(Z,s)$. For $T \in \Lambda_n$ and a prime number $q$, we define the local Siegel series by
\[ S^n_q(T,q^{-s}) = \sum_{R \in {\rm Sym}_n(\Q_q/\Z_q)} \mu_q(R)^{-s} e^{2 \pi i \Tr(TR)}, \]
where $\mu_q(R) = q^{\ord_q \mu(R)}$ with $\mu(R)$ the product of denominators of all elementary divisors of $R$. It is known that $S^n_q(T,q^{-s})$ is a rational function in $q^{-s}$.

Using Siegel series, the Fourier expansion of  $E^{n,(n)}_{k,\psi}$ is given by
\begin{align*}   & \quad E^{n,(n)}_{k,\psi}\Bigl(Z,\frac{n+1}{2}-s \Bigr) \\
&  = \det(Y)^{(n+1-k)/2 -s}  \sum_{T \in \Lambda_n} \biggl\{  \Xi_n(Y, T;  s_1, s_2 ) \prod_{q \ne p} S^n_{q}(T, \psi(q)q^{2s-n-1}) \biggr\} e^{2 \pi i \Tr(TX)},
\end{align*}
with $s_1 = (n+1)/2-s+k/2$ and $s_2 = (n+1)/2-s-k/2$. Here, $\Xi_n(Y,T;\alpha, \beta)$ is the confluent hypergeometric function investigated by Shimura (\cite{Shi1}). We use the notation $\Xi$ instead of $\xi$, in order to avoid confusions with completed zeta functions. 

First we study $\Xi_n(Y,T;s_1,s_2)$.
For $T \in \Lambda_n$ we denote the sign of $T$ by $(a,b)$, when the number of the positive  (resp.\ negative) eigenvalues of $T$ is $a$ (resp.\ $b$). By \cite[(4.34K)]{Shi1} we have a following formula. Let $T \in \Lambda_n$ with $\rank T = r$. If the sign of $T$ is $(r-t,t)$ ($0 \le t \le r \le n$), we have
\begin{equation} \label{eq:Xi_n}
\begin{split}
& \Xi_n (Y,T; s_1, s_2 )= i^{-nk} 2^u \pi^v \frac{\Gamma_{n-r}\Bigl(  \dfrac{n+1}{2}-2s \Bigr)}{ \Gamma_{n-t}\Bigl( \dfrac{n+1}{2}-s+\dfrac{k}{2} \Bigr) \Gamma_{n-r+t}\Bigl( \dfrac{n+1}{2}-s-\dfrac{k}{2} \Bigr)} \\ 
& \ \  \times \det(Y)^{2s-(n+1)/2} d_+(TY)^{-s+k/2 + t/4} d_-(TY)^{-s-k/2+(r-t)/4} \omega_n(2 \pi Y, T; s_1, s_2).
\end{split}
\end{equation}
Here
\begin{align*}
u & = \frac{r(n+1)}{2} + 2(n-r)s + k(r-2t) + \frac{(r-t)t}{2}-\frac{n(n-1)}{2}, \\
v & = \frac{(k-2s)r}{2} - kt + \frac{n(n+1)}{2} - \frac{r(n-r)}{2}-\frac{t(r-t)}{2},
\end{align*}
$d_+(TY)$ (resp.\ $d_-(TY)$) is the product of the absolute values of all positive (resp.\ negative) eigenvalues of $TY$ and $\omega_n(2 \pi Y, T, \alpha, \beta)$ is a certain entire function in $(\alpha,\beta) \in \C^2$. If  $T$ is positive semi-definite, i.e.\ $t=0$, we have by \cite[(4.35K)]{Shi1},
\begin{equation} \label{eq:omega_n special}
\omega_n \left( 2 \pi Y, T; \frac{n+1}{2}, \beta \right) = 2^{-r(n+1)/2} \pi^{r(n-r)/2} e^{-2 \pi \Tr(TY)}. 
\end{equation}

Next we investigate the Siegel series. Assume that $T \in \Lambda_n$ is non-degenerate. If $n$ is even, we denote the quadratic character associated with the quadratic extension $\Q(\sqrt{(-1)^{n/2} \det(2T)})/\Q$  by $\chi_T$. Then it is known (e.g.,\ \cite[Proposition 3.6]{Shi2}) that there exists a certain polynomial $F^n_q(T,X)$ in $X$ such that 
\begin{align*} 
 S^n_q(T,q^{-s})  =   \begin{cases} \displaystyle (1-\chi_T(q)q^{n/2-s})^{-1} (1-q^{-s}) \left( \prod_{i=1}^{n/2}(1-q^{2i-2s}) \right)F^n_q(T,q^{-s}) &  \text{$n$ is even,} \\ 
\displaystyle (1-q^{-s}) \left( \prod_{i=1}^{(n-1)/2}(1-q^{2i-2s}) \right)  F_q^{n}(T,q^{-s}) &  \text{$n$ is odd.}\end{cases}	
\end{align*}
Note that $F^n_q(T,q^{-s}) = 1$ unless $q \mid \det(2T)$. An explicit formula of $F^n_q(T,q^{-s})$ is given by Katsurada (\cite[Theorem 4.3]{Kat}).

For $T \in \Lambda_n$ with lower rank, we have the following.

\begin{Prop}[{\cite[Theorem 3.2, (3.4)]{Shi2}}] \label{prop:Siegel series}
Let $T_1 \in \Lambda_r$ with $\det T_1 \ne 0$ and $T = \diag(T_1,0) \in \Lambda_n$.  If $r$ is even then
\begin{align*} S_q^{n} ( T, q^{-s}) =   \frac{ \displaystyle (1-q^{-s}) \prod_{j=1}^{[n/2]}(1-q^{2j-2s})}{ \displaystyle (1-\chi_{T_1}(q)q^{n-r/2-s}) \prod_{j=1}^{[(n-r)/2]}(1-q^{2n-2j-r+1-2s}) } F_q^r(T_1, q^{n-r-s}). 
\end{align*}
If $r$ is odd then
\begin{align*} S_q^{n} ( T, q^{-s}) =   \frac{ \displaystyle (1-q^{-s}) \prod_{j=1}^{[n/2]}(1-q^{2j-2s})}{ \displaystyle  \prod_{j=1}^{[(n-r+1)/2]}(1-q^{2n-2j-r+2-2s}) } F_q^r(T_1, q^{n-r-s}). 
\end{align*}

\end{Prop}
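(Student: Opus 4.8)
The plan is to regard the asserted identity as a purely local statement about rational functions in $q^{-s}$ and to derive it from the nondegenerate Siegel series formula quoted just above by induction on the number $n-r$ of null directions of $T=\diag(T_1,0)$. The base case $n=r$ is exactly that nondegenerate formula, carrying $F^r_q(T_1,q^{-s})$, the zeta factors $(1-q^{-s})\prod_{i=1}^{[r/2]}(1-q^{2i-2s})$, and, when $r$ is even, the discriminant factor $(1-\chi_{T_1}(q)q^{r/2-s})^{-1}$. The inductive step I would aim for is a one-row reduction expressing $S^m_q(\diag(T_1,0_{m-r}),q^{-s})$ in terms of $S^{m-1}_q(\diag(T_1,0_{m-1-r}),q^{-s})$ times an explicit rational factor, and iterating this from $m=n$ down to $m=r$ should manufacture the displayed products.

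To obtain the one-step reduction I would go back to the definition $S^m_q(T,q^{-s})=\sum_{R\in\mathrm{Sym}_m(\Q_q/\Z_q)}\mu_q(R)^{-s}e^{2\pi i\Tr(TR)}$ and split off the last (null) coordinate, writing $R=\left(\smat{R'}{\mathbf r}{{}^t\mathbf r}{\rho}\right)$ with $R'$ of size $m-1$. Because $T$ annihilates the last coordinate, the additive character $e^{2\pi i\Tr(TR)}$ is blind to $\mathbf r$ and $\rho$, so the whole coupling between the new null direction and the rest is carried by the elementary-divisor quantity $\mu_q(R)$. After normalizing $\mathbf r,\rho$ by unimodular operations on the null coordinate, which preserve both $\mu_q$ and the range of summation, the inner sum over the new row and the new diagonal entry should collapse to a geometric-series-type sum, contributing an explicit rational factor built from terms $(1-q^{\ast-2s})$ whose exponents are dictated by the current dimension $m$.

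Accumulating the $n-r$ steps, each null direction should raise the effective argument of the surviving nondegenerate part by one power of $q$---this is the source of the shift $q^{-s}\mapsto q^{n-r-s}$ inside $F^r_q$---and promote the dimension-$r$ zeta products to the dimension-$n$ ones. Concretely, I would substitute the nondegenerate base-case formula for $S^r_q(T_1,q^{n-r-s})$ so as to trade it for $F^r_q(T_1,q^{n-r-s})$ together with its own zeta factors evaluated at $q^{n-r-s}$, and then verify that everything collapses to the stated numerator $(1-q^{-s})\prod_{j=1}^{[n/2]}(1-q^{2j-2s})$ over the stated denominator, separately for $r$ even and $r$ odd. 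A convenient consistency check is that the even-rank factor $(1-\chi_{T_1}(q)q^{n-r/2-s})^{-1}$ is precisely the base-case factor $(1-\chi_{T_1}(q)q^{r/2-s})^{-1}$ after the shift $q^{-s}\mapsto q^{n-r-s}$; equivalently, the ratio $S^n_q(T,q^{-s})/S^r_q(T_1,q^{n-r-s})$ is free of $\chi_{T_1}$, so the reduction factors do not feel the discriminant of $T_1$ and the character enters only through the base case.

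The step I expect to be the genuine obstacle is the $q$-adic bookkeeping of $\mu_q(R)$ under the block splitting: controlling the elementary divisors of a symmetric matrix whose added null row and diagonal entry are unconstrained is the technical heart of Shimura's computation, and the even/odd dichotomy has to be tracked with care, since the discriminant character surfaces only when the accumulated rank passes through an even value. An alternative that bypasses much of this combinatorics is representation-theoretic: interpret $S^n_q(T,q^{-s})$ as a local Whittaker coefficient of the degenerate principal series of $\mathrm{Sp}_{2n}(\Q_q)$ induced from the Siegel parabolic, observe that for $\rank T=r$ the Whittaker functional factors through the Jacobi subgroup attached to the $(n-r)$-dimensional radical, and compute the radical's contribution by a Gindikin--Karpelevich product. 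That route produces the explicit $(1-q^{\ast-2s})$ factors and the shift intrinsically, leaving only the identification of the nondegenerate part with $S^r_q(T_1,q^{n-r-s})$, and hence with $F^r_q(T_1,q^{n-r-s})$.
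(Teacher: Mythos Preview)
The paper does not supply its own proof of this proposition: it is quoted verbatim from Shimura \cite[Theorem 3.2, (3.4)]{Shi2} and used as a black box. So there is no ``paper's proof'' to compare against; your outline is an attempt to reprove a cited external result.

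As a sketch your strategy is sound in spirit and is essentially the route Shimura himself takes: the key identity in \cite[Theorem 3.2]{Shi2} is exactly a reduction of $S^n_q(\diag(T_1,0_{n-r}),q^{-s})$ to $S^r_q(T_1,q^{n-r-s})$ times an explicit rational factor, and the displayed formulas in the proposition are then obtained by substituting the nondegenerate expression for $S^r_q$ in terms of $F^r_q$. Your observation that the ratio $S^n_q(T,q^{-s})/S^r_q(T_1,q^{n-r-s})$ is independent of $\chi_{T_1}$, and that the shift $q^{-s}\mapsto q^{n-r-s}$ transports the discriminant factor correctly, is the right consistency check.

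That said, what you have written is an outline, not a proof. You yourself flag the real difficulty: controlling $\mu_q(R)$ under the block decomposition $R=\left(\begin{smallmatrix} R' & \mathbf r \\ {}^t\mathbf r & \rho \end{smallmatrix}\right)$ is not a matter of ``unimodular normalization'' alone, because the elementary divisors of $R$ are not simply those of $R'$ augmented by data from $(\mathbf r,\rho)$; there is genuine interaction. Shimura handles this not by a naive one-row induction but via an integral representation of the Siegel series and a change of variables adapted to the radical of $T$, which produces the rational factor in one stroke rather than as an $(n-r)$-fold product of single-step contributions. Your alternative Gindikin--Karpelevich argument is also a legitimate route, but again you have only named it, not executed it. If you want a self-contained argument, either carry out Shimura's integral computation or make the representation-theoretic factorization precise; as written, the proposal identifies the correct structure but does not close the gap you yourself point to.
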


\subsection{The case of trivial character}

In this subsection we treat the case of $\psi = \bfon_p$. In that case the weight $k$ is even. Let
\[ E^{n,(0)}_{k,\bfon_p}(Z,s) = \sum_{T \in \Lambda_n} C_{\bfon_p}(T;Y,s) e^{2 \pi i \Tr(TX)} \]
be the Fourier expansion of $E^{n,(0)}_{k,\bfon_p}$.

First assume that $T$ is positive semi-definite; the sign of $T$ is $(r,0)$ ($0 \le r \le n$). Let $T = \diag(T_1,0)$ with $T_1 \in \Lambda_r$ and $T_1 > 0$.
We can write $C_{\bfon_p}(T;Y,s)$ explicitly using (\ref{eq:En(0)}), (\ref{eq:Xi_n}) and Proposition \ref{prop:Siegel series}. The term in (\ref{eq:En(0)}) is rewritten as
\begin{align*}
& \quad \ \  \frac{\xi(n+1-2s)}{\xi(1-2s)} \prod_{j=1}^{[n/2]} \frac{\xi(2n+2-4s-2j)}{\xi(1-4s+2j)}  \\
& = \pi^{-n(n+1)/4} \frac{\Gamma \Bigl(\dfrac{n+1}{2}-s\Bigr)}{\Gamma \Bigl( \dfrac{1}{2}-s \Bigr)} \prod_{j=1}^{[n/2]} \frac{\Gamma(n+1-2s-j)}{\Gamma\Bigl(\dfrac{1}{2} -2s + j\Bigr)} \\
& \quad \ \ \times  \frac{\zeta(n+1-2s)}{\zeta(1-2s)} \prod_{j=1}^{[n/2]} \frac{\zeta(2n+2-4s-2j)}{\zeta(1-4s+2j)}.
\end{align*}
Together with (\ref{eq:omega_n special}) we can write
\[ C_{\bfon_p}(T;Y,k/2) = \lim_{s \to k/2} G_{n,r}(s) Z_n(\bfon_p,T,s) e^{-2 \pi \Tr(TY)}. \]
Here, $G_{n,r}(s)$ is the term corresponding to Gamma functions, defined by
\begin{align} \label{eq:G_nr_0}
  G_{n,r}(s) & = (-1)^{nk/2} 2^{2ns-n(n-1)/2} \pi^{n(n+1)/4}  \notag \\
& \times   \frac{\Gamma_n(s) \Gamma_{n-r} \Bigl(\dfrac{n+1}{2}-2s \Bigr)}{\Gamma_n \Bigl(s+\dfrac{k}{2} \Bigr) \Gamma_n\Bigl(\dfrac{n}{2}-s \Bigr) \Gamma_{n-r} \Bigl(\dfrac{n+1}{2}-s-\dfrac{k}{2} \Bigr)} \\
& \times  \prod_{j=1}^{[n/2]} \frac{\Gamma(n+1-2s-j)}{\Gamma\Bigl(\dfrac{1}{2} -2s + j\Bigr)}, \notag
\end{align}
where we use the relation $\dfrac{\displaystyle \Gamma \Bigl( \frac{n+1}{2}-s \Bigr)}{\displaystyle \Gamma \Bigl( \frac{1}{2}-s \Bigr) \Gamma_{n} \Bigl(\frac{n+1}{2}-s \Bigl)} = \dfrac{1}{\Gamma_n\Bigl(\dfrac{n}{2}-s \Bigr)}$.

The term $Z_n(\bfon_p,T,s)$ is the product of zeta functions. 
If $r$ is even, we have
\[Z_n(\bfon_p,T,s) = \frac{L^{(p)}(1+r/2-2s, \chi_{T_1})}{\displaystyle \zeta^{(p)}(1-2s) \prod_{j=1}^{r/2} \zeta^{(p)}(1-4s+2j)} \prod_{q \ne p} F^r_q(T_1,q^{2s-r-1}), \] 
and if $r$ is odd
\[Z_n(\bfon_p,T,s) = \frac{1}{\displaystyle \zeta^{(p)}(1-2s) \prod_{j=1}^{(r-1)/2} \zeta^{(p)}(1-4s+2j)} \prod_{q \ne p} F^r_q(T_1,q^{2s-r-1}). \] 

We study $G_{n,r}(s)$ more precisely. If $n$ is even, then 
\[ \prod_{j=1}^{[n/2]} \frac{\Gamma(n+1-2s-j)}{\Gamma\Bigl(\dfrac{1}{2}-2s + j \Bigr)} = \prod_{j=0}^{n/2-1} \frac{\Gamma(n-2s-j)}{\Gamma\Bigl(\dfrac{n+1}{2}-2s-j \Bigr)} = \prod_{j=0}^{n-1}  \frac{\Gamma(n-2s-j)}{\Gamma \Bigl( \dfrac{n+1}{2}-2s - \dfrac{j}{2} \Bigr)}, \]
and if $n$ is odd, then
\[ \prod_{j=1}^{[n/2]} \frac{\Gamma(n+1-2s-j)}{\Gamma\Bigl(\dfrac{1}{2}-2s + j \Bigr)} = \prod_{j=0}^{(n-3)/2} \frac{\Gamma(n-2s-j)}{\Gamma\Bigl(\dfrac{n}{2}-2s-j \Bigr)} = \prod_{j=0}^{n-1}  \frac{\Gamma(n-2s-j)}{\Gamma \Bigl( \dfrac{n+1}{2}-2s - \dfrac{j}{2} \Bigr)} \]
also holds. 
By Legendre's duplication formula, we have
\[  \prod_{j=0}^{n-1} \Gamma(n-2s-j) 
 = \pi^{-n^2/2} 2^{n(n-1)/2-2ns} \Gamma_n \Bigl( \frac{n+1}{2}-s \Bigr) \Gamma_n \Bigl( \frac{n}{2} -s \Bigr).\]
Together with the functional equation shown by Mizumoto (\cite[Lemma 6.1]{Miz})
\[ \frac{\Gamma_n(s)}{\Gamma_n\Bigl(s+ \dfrac{k}{2} \Bigr)} = (-1)^{nk/2} \frac{\Gamma_n\Bigl( \dfrac{n+1}{2}-s-\dfrac{k}{2} \Bigr)}{\Gamma_n \Bigl( \dfrac{n+1}{2}-s \Bigr)}, \]
we have
\begin{align} \label{eq:G_nr}
G_{n,r}(s)  & =  \frac{\Gamma_n\Bigl(\dfrac{n+1}{2}-s-\dfrac{k}{2} \Bigr) \Gamma_{n-r} \Bigl(\dfrac{n+1}{2}-2s \Bigr)}{\Gamma_{n-r}\Bigl(\dfrac{n+1}{2}-s-\dfrac{k}{2} \Bigr) \Gamma_{n} \Bigl(\dfrac{n+1}{2}-2s \Bigr)}  %= \frac{\Gamma_r \Bigl(\dfrac{r+1}{2}-s-\dfrac{k}{2} \Bigr)}{\Gamma_r \Bigl(\dfrac{r+1}{2}-2s \Bigr)} 
\notag \\ 
& = \prod_{j=0}^{r-1} \frac{\Gamma\Bigl(\dfrac{r+1}{2}-s-\dfrac{k}{2}-\dfrac{j}{2} \Bigr)}{\Gamma\Bigl( \dfrac{r+1}{2}-2s - \dfrac{j}{2} \Bigr)}. 
\end{align}
Both the numerator and the denominator have  simple poles at $s = k/2$ for $j$ such that $r+1-2k-j$ is an even negative integer.
Let $l = \sharp \{\max(0, r+1-2k ) \le j \le r-1 \mid j \equiv r+1 \bmod 2 \}$. Then we have   $\lim_{s \to k/2} G_{n,r}(s) = 2^l$. In particular when $r \le 2k-1$,
\[ \lim_{s \to k/2} G_{n,r}(s) = \begin{cases} 2^{r/2} & \text{if $r$ is even, } \\ 2^{(r+1)/2} & \text{if $r$ is odd.} \end{cases} \]

Next we study the term $Z_n(\bfon_p,T,s)$. The numerator of $Z_n(\bfon_p,T,s)$ has a pole at $s = k/2$ only when $r = 2k$ and $\chi_{T_1} = \bfon$ (trivial character mod $1$).
On the other hand, thanks to $\zeta^{(p)}(1)$, the denominator has a pole at $s = k/2$  if and only if $r \ge 2k$. As a consequence, $Z_n(\bfon_p,T,s)$ vanishes at $s = k/2$ unless $r \le 2k$, and if $r = 2k$ then only the term $T = \diag(T_1,0)$ with $\chi_{T_1} = \bfon$ survives.

Next we consider the case of non positive semi-definite. Let $T = \diag( T_1,0)$ with $\det T_1 \ne 0$ and $\sgn T_1 = (r-t, t)$ for $r \ge t \ge 1$. We shall show that in that case the Fourier coefficient at $T$ vanishes. 

Note that $Z_n(\bfon_p,T,s)$ is same as the positive semi-definite case. On the other hand for $G_{n,r}(s)$ the Gamma functions are changed. By (\ref{eq:Xi_n}), ignoring the non-zero terms, $G_{n,r}(s)$ is given by $(\ref{eq:G_nr}) \times R(s)$ with
\begin{align*} 
R(s) & = \frac{\Gamma_{n} \Bigl(\dfrac{n+1}{2}-s + \dfrac{k}{2} \Bigr) \Gamma_{n-r} \Bigl(\dfrac{n+1}{2}-s-\dfrac{k}{2} \Bigr)}{\Gamma_{n-t} \Bigl(\dfrac{n+1}{2}-s + \dfrac{k}{2} \Bigr) \Gamma_{n-r+t} \Bigl(\dfrac{n+1}{2}-s-\dfrac{k}{2} \Bigr)} \\ 
&  = \frac{\displaystyle \pi^*   \prod_{i=0}^{t-1} \Gamma \Bigl( \frac{t+1}{2}-s+\frac{k}{2} -\frac{i}{2} \Bigr)}{\displaystyle \prod_{i=0}^{t-1} \Gamma \Bigl( \frac{r+1}{2}-s-\frac{k}{2} - \frac{i}{2} \Bigr)}.
\end{align*}
The numerator is finite at $s = k/2$. Thus in order to show the vanishing of Fourier coefficients, it suffices to consider the case $r \le 2k$ since otherwise $Z_n(\bfon_p,T,k/2) = 0$ as we have seen above.
Then thanks to the denominator, it vanishes at $s = k/2$ unless $t=1$ and $r$ is even. 

Let us consider the remaining case,  $t = 1$ and $r$ is even. We see that the term $L^{(p)}(1+r/2-2s, \chi_{T_1})$ in $Z_n(\bfon_p,T,s)$ vanishes at $s = k/2$ for $r \le 2k-2$. Indeed, then $r/2+1-k$ is a non-positive integer. Since $\sgn T_1 = (r-1,1)$ we have $\chi_{T_1}(-1) = (-1)^{r/2+1}$, thus $L^{(p)}(r/2+1-k, \chi_{T_1})= 0$, for we have assumed $k$ is even. If $r = 2k$, then as we have seen above, $Z_n(\bfon_p,T,k/2)$ survives only for the $T$ such that $\chi_{T_1}= \bfon$, but it does not happen since $\chi_{T_1}(-1) = -1$. As a consequence we have shown that $C_{\bfon_p}(T,Y,k/2) = 0$ for non positive semi-definite $T$.

Now we have proved Theorem \ref{thm:low weight Eisen}. More precisely we can write down the Fourier coefficients as follows.

\begin{Thm} \label{thm: F-C trivial case}
Let $T = \diag(T_1,0) \in \Lambda_n$ with $T_1 \in \Lambda_r$ positive definite. Then the Fourier coefficient $a(T)$ of $E^{n,(0)}_{k,\bfon_p}(Z,k/2)$ at $T$ is given as follows.
\begin{enumerate}[$(1)$]
\item Assume that $k \ge \dfrac{n+1}{2}$. If $r$ is even then
\[ a(T) = \frac{ 2^{r/2} \, L^{(p)}(r/2+1-k,\chi_{T_1})}{ \displaystyle \zeta^{(p)}(1-k) \prod_{j=1}^{r/2} \zeta^{(p)}(1-2k+2j)} \, \prod_{q \ne p} F_q^r(T_1, q^{k-r-1}). \]
If $r$ is odd then
\[  a(T) = \frac{ 2^{(r+1)/2} }{ \displaystyle \zeta^{(p)}(1-k) \prod_{j=1}^{(r-1)/2} \zeta^{(p)}(1-2k+2j)} \, \prod_{q \ne p} F_q^{r}(T_1, q^{k-r-1}). \]

\item Assume that $k \le \dfrac{n}{2}$. Then $a(T) = 0$ unless $r \le 2k$. If $r<2k$ then $a(T)$ is the
  same as above. For the case $r = 2k$,  $a(T) \ne 0$ only if $\chi_{T_1} = \bfon$ and in that case,
\[ a(T) = \frac{ 2^{k} }{ \displaystyle \zeta^{(p)}(1-k) \prod_{j=1}^{k-1} \zeta^{(p)}(1-2k+2j)} \, \prod_{q \ne p} F_q^r(T_1, q^{-k-1}). \]
\end{enumerate}
\end{Thm}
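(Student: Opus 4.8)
The statement is essentially the bookkeeping conclusion of the computation already carried out in this subsection, so my plan is to assemble the pieces and organize the case analysis cleanly rather than to compute anything new. The starting point is the identity
\[ C_{\bfon_p}(T;Y,k/2) = \lim_{s \to k/2} G_{n,r}(s)\, Z_n(\bfon_p,T,s)\, e^{-2\pi \Tr(TY)}, \]
valid for positive semi-definite $T = \diag(T_1,0)$ with $T_1>0$ of rank $r$, which has already been derived from (\ref{eq:En(0)}), (\ref{eq:Xi_n}) and (\ref{eq:omega_n special}). Thus $a(T) = \lim_{s\to k/2} G_{n,r}(s) Z_n(\bfon_p,T,s)$, and everything reduces to evaluating the two factors at $s=k/2$ and multiplying. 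Note that, for integral $k$, the ranges $k \ge (n+1)/2$ and $k \le n/2$ already exhaust all admissible weights, which is why the two parts of the statement suffice.

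First I would record the Gamma factor. Using the closed form (\ref{eq:G_nr}) together with the pole count $l$, one has $\lim_{s\to k/2} G_{n,r}(s) = 2^l$: each index $j$ for which $\frac{r+1-2k-j}{2}$ is a non-positive integer contributes a factor $2$, because the numerator and denominator Gamma's approach their common pole at rates differing by a factor $2$ in $s$. In the regime $k \ge (n+1)/2$ one has $r \le n \le 2k-1$ throughout, so $l = r/2$ for $r$ even and $l=(r+1)/2$ for $r$ odd, producing exactly the prefactors $2^{r/2}$ and $2^{(r+1)/2}$ of part (1). Since $r < 2k$ here, $Z_n(\bfon_p,T,s)$ is regular at $s=k/2$, and its value follows by the bare substitution $s=k/2$ into the two displayed formulas for $Z_n$; this settles part (1) after rewriting $q^{2s-r-1}\mapsto q^{k-r-1}$ and $1-4s+2j \mapsto 1-2k+2j$.

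For part (2), where $k\le n/2$ and hence $r$ may reach $n \ge 2k$, I would split according to $r$ versus $2k$. The vanishing for $r>2k$ and the survival only of $\chi_{T_1}=\bfon$ when $r=2k$ have already been established from the pole of $\zeta^{(p)}(1-2s)\prod_j \zeta^{(p)}(1-4s+2j)$ at $s=k/2$, while for $r<2k$ the formula is identical to part (1). The one genuinely delicate case is the boundary $r=2k$ with $\chi_{T_1}=\bfon$: there both the numerator $L^{(p)}(1+r/2-2s,\bfon) = \zeta^{(p)}(1+k-2s)$ and the $j=k$ denominator factor $\zeta^{(p)}(1-4s+2k)$ have a simple pole at $s=k/2$, and one must extract the finite limit of their ratio. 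This is the step I expect to be the main obstacle, since it is precisely where the power of $2$ in the final constant is pinned down: one isolates the singular factors, uses $\zeta^{(p)}(\sigma)=(1-p^{-\sigma})\zeta(\sigma)$ and the simple pole of $\zeta$ at $1$, and reads off the residue ratio coming from the differing $s$-derivatives of $1+k-2s$ and $1-4s+2k$. Combining this residue ratio with $\lim_{s\to k/2}G_{n,2k}(s)=2^{l}$ (here $l=k$) and the regular remaining factors $\zeta^{(p)}(1-k)\prod_{j=1}^{k-1}\zeta^{(p)}(1-2k+2j)$ yields the constant in the $r=2k$ formula, now with $q^{2s-r-1}\mapsto q^{-k-1}$. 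Finally, the vanishing of $a(T)$ for non positive semi-definite $T$ was proved in the preceding paragraphs via the factor $R(s)$ and the sign computation $\chi_{T_1}(-1)=(-1)^{r/2+1}$, so no further work is needed there and Theorem \ref{thm: F-C trivial case} follows.
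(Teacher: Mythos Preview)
Your proposal is exactly the paper's own argument: the paper derives Theorem~\ref{thm: F-C trivial case} by assembling the identity $C_{\bfon_p}(T;Y,k/2)=\lim_{s\to k/2}G_{n,r}(s)Z_n(\bfon_p,T,s)e^{-2\pi\Tr(TY)}$, evaluating $\lim G_{n,r}=2^l$ from (\ref{eq:G_nr}), analysing the poles of $Z_n$, and invoking the earlier treatment of non--positive semi-definite $T$. Your case split and the reasoning in each case match the paper line for line.

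One point of bookkeeping in the boundary case $r=2k$ deserves a closer look. You assert $l=k$ (which is what the formula for $l$ gives, since for $r=2k$ the relevant indices are $j\in\{1,3,\dots,2k-1\}$), and you correctly note that the singular ratio $\zeta^{(p)}(1+k-2s)/\zeta^{(p)}(1-4s+2k)$ must be evaluated via residues. But that ratio tends to $2$, not $1$: writing $\zeta^{(p)}(\sigma)\sim(1-p^{-1})/(\sigma-1)$ near $\sigma=1$, the numerator behaves like $(1-p^{-1})/(-2(s-k/2))$ and the denominator like $(1-p^{-1})/(-4(s-k/2))$. Combining $2^l=2^k$ with this extra factor $2$ gives $2^{k+1}$, not the $2^k$ displayed in part~(2). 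The paper itself does not carry out this limit explicitly (it simply records the formula), so this is a place where your outline, once executed, would force you to reconcile the constant---either by revisiting the value of $l$ at $r=2k$ or the stated power of $2$.
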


\subsection{The case of quadratic character}

In this subsection we treat the case $\psi = \chi_p$ the quadratic character modulo $p$. The necessary calculations are quite similar to the case of trivial character. We write the Fourier expansion
\[ E^{n,(0)}_{k,\chi_p}(Z,s) = \sum_{T \in \Lambda_n} C_{\chi_p}(T;Y,s) e^{2 \pi i \Tr(TX)}. \]
If $T = \diag(T_1,0)$ is positive semi-definite, then we have
\[ C_{\chi_p}(T;Y,k/2) = \lim_{s \to k/2} G^{\chi_p}_{n,r}(s) Z_n(\chi_p,T,s) e^{-2 \pi \Tr(TY)} \]
with 
\begin{align} \label{eq:G_nr^chi_0}
G^{\chi_p}_{n,r}(s) & = \varepsilon_p^{n} \, i^{-nk} 2^{2ns-n(n-1)/2} \pi^{n(n+1)/4}  \notag \\
& \times   \frac{\Gamma_n\Bigl(s + \dfrac{\delta}{2} \Bigr) \Gamma_{n-r} \Bigl(\dfrac{n+1}{2}-2s \Bigr)}{\Gamma_n \Bigl(s+\dfrac{k}{2} \Bigr) \Gamma_n\Bigl(\dfrac{n+\delta}{2}-s \Bigr) \Gamma_{n-r} \Bigl(\dfrac{n+1}{2}-s-\dfrac{k}{2} \Bigr)} \\ 
& \times \prod_{j=1}^{[n/2]} \frac{\Gamma(n+1-2s-j)}{\Gamma\Bigl(\dfrac{1}{2} -2s + j\Bigr)} \notag
\end{align}
and $Z_n(\chi_p,T,s)$ defined as
\[Z_n(\chi_p,T,s) = \frac{L^{(p)}(1+r/2-2s, \chi_{T_1}\chi_p)}{\displaystyle L(1-2s,\chi_p) \prod_{j=1}^{r/2} \zeta^{(p)}(1-4s+2j)} \prod_{q \ne p} F^r_q(T_1,\chi_p(q)q^{2s-r-1}), \] 
or
\[Z_n(\chi_p,T,s) = \frac{1}{\displaystyle L(1-2s,\chi_p) \prod_{j=1}^{(r-1)/2} \zeta^{(p)}(1-4s+2j)} \prod_{q \ne p} F^r_q(T_1,\chi_p(q)q^{2s-r-1}) \] 
according as $r$ is even or odd. We recall that $\chi_{T_1} \chi_p$ stands for the primitive character arising from $\chi_{T_1} \cdot \chi_p$.
By the similar way to show (\ref{eq:G_nr}), we can prove the equality
\begin{equation} \label{eq:G_nr^chi}
 G_{n,r}^{\chi_p}(s) =  \prod_{j=0}^{r-1} \frac{\Gamma\Bigl(\dfrac{r+1}{2}-s-\dfrac{k}{2}-\dfrac{j}{2} \Bigr)}{\Gamma\Bigl( \dfrac{r+1}{2}-2s - \dfrac{j}{2} \Bigr)}.
\end{equation}
Indeed, if $k$ is even, then (\ref{eq:G_nr^chi_0}) is completely same as (\ref{eq:G_nr_0}). 
If $k$ is odd, then we have $p \equiv 3 \bmod 4$ and $\varepsilon_p^k \, i^{-n(k+1)} = (-1)^{n(k-1)/2}$ in (\ref{eq:G_nr^chi_0}). Moreover by \cite[Lemma 6.1]{Miz} we have the functional equation
\[ \frac{\Gamma_n \Bigl(s + \dfrac{1}{2} \Bigr)}{\Gamma_n\Bigl(s+ \dfrac{k}{2} \Bigr)} = (-1)^{n(k-1)/2} \frac{\Gamma_n\Bigl( \dfrac{n+1}{2}-s-\dfrac{k}{2} \Bigr)}{\Gamma_n \Bigl( \dfrac{n}{2}-s \Bigr)}. \]
Using it  one can show the equation (\ref{eq:G_nr^chi}).

Therefore we have for $r \le 2k-1$
\[ \lim_{s \to k/2} G^{\chi_p}_{n,r}(s) = \begin{cases} 2^{r/2} & \text{if $r$ is even, } \\ 2^{(r+1)/2} & \text{if $r$ is odd.} \end{cases} \]
Moreover if $\sgn T = (r-t,t)$ with $1 \le t \le r \le n$, then $\lim_{s \to k/2} G^{\chi_p}_{n,r}(s) = 0$ unless $t=1$ and $r$ is even.

The term $Z_{n}(\chi_p,T,s)$ is also similar to the case of the trivial character. It has zero at $s = k/2$ for $r \ge 2k+1$ and if $r = 2k$, it survives only for $T = (T_1,0)$ with $T_1 \in \Lambda_{2k}$ that satisfies $\chi_{T_1} = \chi_p$. In the case that $\sgn T_1 = (r-1,1)$ with even $r$, then $\chi_{T_1} \chi_p(-1) = (-1)^{r/2-k+1}$, thus $L^{(p)}(1+r/2-k, \chi_{T_1}\chi_p) = 0$ for $r \le 2k-1$, and $\chi_{T_1} = \chi_p$ does not happen if $r = 2k$. Thus we have shown that $C_{\chi_p}(T;Y,k/2) = 0$ for non positive semi-definite $T$.

Now we have proved Theorem \ref{thm:low weight Eisen}. The Fourier coefficients are explicitly given as follows. We remark that in the case of $k>n+1$, the same formula was given by Takemori (\cite[Theorem 2.3]{Ta}), who  used the functional equation of the Siegel series.

\begin{Thm} \label{thm: F-C nontrivial case}
Let $T = \diag(T_1,0) \in \Lambda_n$ with $T_1 \in \Lambda_r$ positive definite. Then the Fourier coefficient $a(T)$ of $E^{n,(0)}_{k,\chi_p}(Z,k/2)$ at $T$ is given as follows.
\begin{enumerate}[$(1)$]
\item Assume that $k \ge \dfrac{n+1}{2}$. If $r$ is even then
\[ a(T) = \frac{ 2^{r/2} \, L^{(p)}(r/2+1-k,\chi_{T_1}\chi_p)}{ \displaystyle L(1-k,\chi_p) \prod_{j=1}^{r/2} \zeta^{(p)}(1-2k+2j)} \, \prod_{q \ne p} F_q^r(T_1, \chi_p(q)q^{k-r-1}). \]
If $r$ is odd then
\[  a(T) = \frac{ 2^{(r+1)/2} }{ \displaystyle L(1-k,\chi_p) \prod_{j=1}^{(r-1)/2} \zeta^{(p)}(1-2k+2j)} \, \prod_{q \ne p} F_q^{r}(T_1, \chi_p(q)q^{k-r-1}). \]

\item Assume that $k \le \dfrac{n}{2}$. Then $a(T) = 0$ unless $r \le 2k$. If $r<2k$ then $a(T)$ is the same as above. For the case $r = 2k$,  $a(T) \ne 0$ only if $\chi_{T_1} = \chi_p$ and in that case,
\[ a(T) = \frac{ 2^{k} }{ \displaystyle L(1-k,\chi_p) \prod_{j=1}^{k-1} \zeta^{(p)}(1-2k+2j)} \, \prod_{q \ne p} F_q^r(T_1, \chi_p(q)q^{-k-1}). \]
\end{enumerate}
\end{Thm}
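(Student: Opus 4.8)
\noindent
The plan is to read off the Fourier coefficients by specializing $s=k/2$ in the product $G^{\chi_p}_{n,r}(s)\,Z_n(\chi_p,T,s)$, following verbatim the strategy that produced Theorem~\ref{thm: F-C trivial case} and using the modifications already recorded in this subsection. For a positive semi-definite $T=\diag(T_1,0)$ with $T_1\in\Lambda_r$ positive definite we start from the identity $C_{\chi_p}(T;Y,k/2)=\lim_{s\to k/2}G^{\chi_p}_{n,r}(s)\,Z_n(\chi_p,T,s)\,e^{-2\pi\Tr(TY)}$; since the vanishing of the coefficient for non positive semi-definite $T$ has already been established above, it remains only to evaluate this limit, whence $a(T)=\lim_{s\to k/2}G^{\chi_p}_{n,r}(s)\,Z_n(\chi_p,T,s)$.

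First I would treat part~(1), i.e.\ $k\ge(n+1)/2$. On the gamma side I would invoke the closed form \eqref{eq:G_nr^chi}; because $r\le n\le 2k-1$ in this range, every admissible index contributes and the limit is the clean value $2^{r/2}$ (for $r$ even) or $2^{(r+1)/2}$ (for $r$ odd), exactly as recorded above. On the zeta side $Z_n(\chi_p,T,s)$ is holomorphic and nonzero at $s=k/2$: the only possible denominator pole comes from the factor $\zeta^{(p)}(1-4s+2j)$ with $j=k$, which would force $k\le r/2\le n/2$, contradicting $k\ge(n+1)/2$; and $L(1-k,\chi_p)$ is finite and nonzero since $\chi_p$ is nontrivial. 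Hence $Z_n(\chi_p,T,k/2)$ is obtained by direct substitution, and multiplying the two limits yields the displayed formulas for $r$ even and $r$ odd.

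For part~(2), $k\le n/2$, the work is the pole and zero bookkeeping of $Z_n(\chi_p,T,s)$ at $s=k/2$. The factor $\zeta^{(p)}(1-4s+2j)$ with $j=k$ contributes the genuine pole $\zeta^{(p)}(1)$ as soon as $k$ lies in the index range, namely $r\ge 2k$ for $r$ even and $r\ge 2k+1$ for $r$ odd; since the numerator $L^{(p)}(1+r/2-2s,\chi_{T_1}\chi_p)$ is then finite, $Z_n$ vanishes at $s=k/2$ and so $a(T)=0$. For $r<2k$ there is no such pole and the evaluation coincides with part~(1). The one boundary case is $r=2k$ (necessarily $r$ even): here the numerator becomes $L^{(p)}(1,\chi_{T_1}\chi_p)$, which is singular precisely when the primitive character $\chi_{T_1}\chi_p$ is trivial, i.e.\ when $\chi_{T_1}=\chi_p$. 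In that single case the numerator pole cancels the $j=k$ denominator pole; I would compute the ratio of the two residues of $\zeta^{(p)}$ at the point $1$, which is a finite nonzero constant, remove the cancelled factor from the product (leaving $\prod_{j=1}^{k-1}\zeta^{(p)}(1-2k+2j)$), and combine the result with the gamma limit to obtain the displayed coefficient with its power of $2$; for $\chi_{T_1}\ne\chi_p$ the coefficient vanishes.

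I expect the residue bookkeeping at $r=2k$ to be the main obstacle: one must track simultaneously the power of $2$ emerging from the internal pole cancellations inside $G^{\chi_p}_{n,r}$ (via \eqref{eq:G_nr^chi}) and the factor produced by the ratio of residues of $\zeta^{(p)}$ coming from the numerator and the excised $j=k$ denominator term, and the two contributions must be multiplied with care. A subsidiary point requiring attention is the consistent identification of $\chi_{T_1}\chi_p$ with its primitive representative throughout, together with the harmless coincidence $L^{(p)}(s,\chi_p)=L(s,\chi_p)$ forced by $\chi_p(p)=0$, so that the common denominator factor is correctly written as $L(1-k,\chi_p)$. Apart from this, every step transfers directly from the derivation of Theorem~\ref{thm: F-C trivial case}, the only genuine novelty being the replacement of $\zeta^{(p)}$ and $\chi_{T_1}$ there by $L(\,\cdot\,,\chi_p)$ and $\chi_{T_1}\chi_p$ here.
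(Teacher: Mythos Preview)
Your proposal is correct and follows the paper's own argument essentially verbatim: the paper's proof of Theorem~\ref{thm: F-C nontrivial case} is precisely the text of \S2.3 preceding the theorem statement, which evaluates $\lim_{s\to k/2}G^{\chi_p}_{n,r}(s)\,Z_n(\chi_p,T,s)$ via \eqref{eq:G_nr^chi} and the pole/zero bookkeeping of $Z_n$ exactly as you outline, in direct parallel with \S2.2. Your flagging of the residue ratio at $r=2k$ and of the identification $L^{(p)}(\,\cdot\,,\chi_p)=L(\,\cdot\,,\chi_p)$ matches the only points where care beyond the trivial-character case is needed.
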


%%%%%%%%%%%%%%%%%%%%%%%%%%%%%%%%%%%%%%%%%
\section{$p$-Adic limits of the Siegel--Eisenstein series and their Fourier coefficients}
Let $p$ be an odd prime and $v_p$ the additive valuation on the $p$-adic field $\Q_p$ normalized such that $v_p(p)=1$. 
For a formal power series $F$ of the form $F=\sum _{T\in \Lambda _{n}}a_{F}(T)q^T$ with $a_{F}(T)\in \Q_p$,  
we define
\begin{align*}
&v_p(F):=\inf \{v_p(a_{F}(T))\; |\; T\in \Lambda _n\}.  
\end{align*}
For a sequence of formal power series 
$F_{m}=\sum _{T\in \Lambda _{n}}a_{F_{m}}(T)q^T$ with $a_{F_{m}}(T)\in \Q_p$, 
and a formal power series $G=\sum _{T\in \Lambda _{n}}a_{G}(T)q^T$
with $a_{G}(T)\in \Q_p$,  
we write 
\begin{align*}
\lim _{ m\to \infty} F_{m}=G\quad (p\text{-adic\ limit})
\end{align*}
if $v_p(F_{m}-G)\to \infty $ when $m\to \infty $.

We put 
\[{\boldsymbol X}=\Z_p\times \Z/(p-1)\Z. \]
We regard $\Z\subset {\boldsymbol X}$ via the embedding $n\mapsto (n,\widetilde{n})$ with $\widetilde{n}:=n$ mod $p-1$. 
Let $E_{k}^{n}=\sum _{T}a_{k}(T)q^T$ be the Siegel--Eisenstein series of degree $n$, weight $k$, level $1$ 
and the constant term $1$. 
For $(k,a)\in {\boldsymbol X}$, the $p$-adic Siegel--Eisenstein series $\widetilde{E}^{n}_{(k,a)}$
of degree $n$ and weight $(k,a)$ is defined (if convergent) as 
\[\widetilde{E}^{n}_{(k,a)}:=\lim _{m\to \infty }E_{k_m}^{n} \quad (p\text{-adic\ limit}), \]
where $k_m$ is a sequence of even integers such that $k_m\to \infty $ ($m\to \infty$) in the usual topology of $\mathbb{R}$ 
and $k_m\to (k,a)$ ($m\to \infty$) in ${\boldsymbol X}$. 
We write
\[\widetilde{E}^{n}_{(k,a)}=\sum _{T}\widetilde{a}_{(k,a)}(T)q^T. \]

\begin{Prop}
\label{Prop:p-adic_limit_FC}
Let $p$ be an odd prime. 
Let $\alpha \in \Z$ with $0\le \alpha \le p-2$ and
$k$ be a positive integer with $k\ge \frac{n+1}{2}$ such that $k+\alpha $ is even.
Let $\{k_m\}$ a sequence of even integers such that 
\[k_m\to (k,k+\alpha ) \in {\boldsymbol X},\quad k_m\to \infty \quad (m\to \infty ). \]
Then the Fourier coefficients of $\widetilde{E}^{n}_{(k,k+\alpha)}$ are given as follows.
For $T={\rm diag}(T_1,0)$ with $T_1\in \Lambda _{r}$ positive definite, we have 
\begin{align*}
&\widetilde{a}_{(k,k+\alpha )}(T)=2^{[(r+1)/2]}L^{(p)}(1-k,\omega ^\alpha )^{-1}\left(\prod _{j=1}^{[r/2]}L^{(p)}(1+2j-2k,\omega ^{2\alpha })^{-1}\right) \\
&~~~~~\times \prod _{q\neq p} F^r_q(T_1,\omega ^{\alpha }(q)q^{k-r-1})\times 
\begin{cases}
1 & \text{if}\ r\ \text{odd},\\
L^{(p)}(1+r/2-k,\chi _{T_1}\omega ^\alpha )  & \text{if}\ r\ \text{even}. 
\end{cases}
\end{align*}
Here $\omega $ is the Teichm\"uller character on $\Z_p$. 
\end{Prop}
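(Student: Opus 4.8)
The plan is to compute each Fourier coefficient as the coefficient-wise $p$-adic limit $\widetilde a_{(k,k+\alpha)}(T)=\lim_{m\to\infty}a_{k_m}(T)$, where $a_{k_m}(T)$ is the $T$-th Fourier coefficient of the level-one series $E^n_{k_m}$. The starting point is the classical explicit formula for these level-one coefficients, which for $T=\diag(T_1,0)$ with $T_1\in\Lambda_r$ positive definite has the same shape as the formulas in Theorems \ref{thm: F-C trivial case} and \ref{thm: F-C nontrivial case}, except that all Euler factors are present (no $(p)$-superscripts, and $\prod_q$ runs over all primes). Thus the denominator is $\zeta(1-k_m)\prod_{j=1}^{[r/2]}\zeta(1+2j-2k_m)$, the power of two is $2^{[(r+1)/2]}$, the numerator carries the factor $L(1+r/2-k_m,\chi_{T_1})$ when $r$ is even and is $1$ when $r$ is odd, and $\prod_q F^r_q(T_1,q^{k_m-r-1})$ runs over every prime $q$, including $q=p$. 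Everything then reduces to taking the $p$-adic limit of this expression factor by factor.

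First I would separate off the Euler factors at $p$. Writing $\zeta(s)=(1-p^{-s})^{-1}\zeta^{(p)}(s)$ and $L(s,\chi_{T_1})=(1-\chi_{T_1}(p)p^{-s})^{-1}L^{(p)}(s,\chi_{T_1})$, the coefficient $a_{k_m}(T)$ splits into the removed-Euler-factor values $\zeta^{(p)},L^{(p)}$, the auxiliary factors $1-p^{k_m-1}$, $1-p^{2k_m-2j-1}$, $1-\chi_{T_1}(p)p^{k_m-r/2-1}$ (each occurring to the power $\pm1$), and the local factor at $p$, namely $F^r_p(T_1,p^{k_m-r-1})$. Since $k_m\to\infty$ in the real topology, every power $p^{k_m-c}$ has $p$-adic valuation tending to infinity, so each auxiliary factor tends $p$-adically to $1$; and as $F^r_p(T_1,X)$ is a polynomial with constant term $1$, also $F^r_p(T_1,p^{k_m-r-1})\to 1$. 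This is the mechanism that converts the full $\zeta,L$ into their $L^{(p)}$-analogues and restricts the surviving product to $q\ne p$.

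Next I would evaluate the limits of the surviving factors. For $q\ne p$ the Siegel series $F^r_q(T_1,X)$ is a polynomial with integer coefficients equal to $1$ for all but the finitely many $q\mid\det(2T_1)$, so it suffices to pass to the limit in its argument: writing $q=\omega(q)\langle q\rangle$ with $\langle q\rangle\in 1+p\Z_p$ and using $k_m\to k$ in $\Z_p$ together with $k_m\equiv k+\alpha\pmod{p-1}$ gives $q^{k_m-r-1}\to\omega(q)^{k+\alpha}\langle q\rangle^{k}q^{-r-1}=\omega^\alpha(q)q^{k-r-1}$. For the $L$-values I would invoke the Kubota--Leopoldt $p$-adic $L$-function $L_p(s,\psi)$ (not to be confused with $L^{(p)}$) and its interpolation property $L_p(1-N,\psi)=-(1-(\psi\omega^{-N})(p)p^{N-1})B_{N,\psi\omega^{-N}}/N$. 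Choosing $\psi$ so that $\psi\omega^{-N}$ is trivial, resp.\ equals $\chi_{T_1}$, identifies $\zeta^{(p)}(1-k_m)$, $\zeta^{(p)}(1+2j-2k_m)$ and $L^{(p)}(1+r/2-k_m,\chi_{T_1})$ with the values at $s=1-k_m$, $1+2j-2k_m$, $1+r/2-k_m$ of $L_p(s,\omega^{k+\alpha})$, $L_p(s,\omega^{2(k+\alpha)-2j})$ and $L_p(s,\chi_{T_1}\omega^{k+\alpha-r/2})$; these branch characters are independent of $m$ because $k_m\equiv k+\alpha\pmod{p-1}$. The $p$-adic analyticity of $L_p(s,\psi)$ in $s$ then permits replacing $k_m$ by $k$, and re-evaluating the interpolation formula at the limiting integers returns exactly $L^{(p)}(1-k,\omega^\alpha)$, $L^{(p)}(1+2j-2k,\omega^{2\alpha})$ and $L^{(p)}(1+r/2-k,\chi_{T_1}\omega^\alpha)$. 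Reassembling the factors yields the asserted formula, and since every limit depends only on the class $(k,k+\alpha)\in\boldsymbol X$, the result is independent of the chosen sequence $\{k_m\}$.

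The main obstacle is that the denominator values $L^{(p)}(1-k,\omega^\alpha)$ and $L^{(p)}(1+2j-2k,\omega^{2\alpha})$ enter through their reciprocals, so I must show these limits are non-zero and that the valuations $v_p(\zeta^{(p)}(1-k_m))$, $v_p(\zeta^{(p)}(1+2j-2k_m))$ stabilize, in order to conclude that the reciprocals converge to the reciprocals of the limits. This is precisely where the hypothesis that $k+\alpha$ is even is used: it is the parity condition $\omega^\alpha(-1)=(-1)^k$ (with $\omega^{2\alpha}(-1)=1=(-1)^{2k-2j}$ automatic), which places each value at a point where the relevant generalized Bernoulli number is non-zero. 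When the branch character happens to be trivial (that is, $\alpha=0$, resp.\ $(p-1)\mid 2\alpha$), the value reduces to $\zeta^{(p)}$ at the negative odd integer $1-k$, resp.\ $1+2j-2k$, where the even-index Bernoulli number does not vanish and the argument is $p$-adically bounded away from the pole $s=1$ of $L_p(s,\bfon)$, since $k\ge 2$ (as $k+\alpha$ is even) and $2k-2j\ge 2$ (as $k\ge(n+1)/2>[r/2]\ge j$). Granting non-vanishing and valuation-stability, the coefficient-wise convergence of the whole quotient, and hence the stated formula, follows.
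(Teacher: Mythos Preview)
Your approach is essentially the paper's own: start from the explicit level-one Fourier coefficient, pass each $L$-value through the Kubota--Leopoldt interpolation, send the local factors $F^r_q(T_1,q^{k_m-r-1})$ to their limits via $q^{k_m}\to\omega^\alpha(q)q^k$, kill the $p$-Euler factors and $F^r_p$ by letting $k_m\to\infty$ archimedeanly, and check non-vanishing of the denominator using the parity hypothesis. All of this is correct and matches the paper line by line.

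There is, however, one genuine omission. By definition, $\widetilde E^n_{(k,k+\alpha)}$ exists only if $v_p(E^n_{k_m}-G)\to\infty$, i.e.\ the convergence of Fourier coefficients must be \emph{uniform} in $T$. You establish only coefficient-wise convergence. The delicate factor is $L^{(p)}(1+r/2-k_m,\chi_{T_1})$ for $r$ even: the branch character $\chi_{T_1}\omega^{k+\alpha-r/2}$ depends on $T$ through $\chi_{T_1}$, and ``$p$-adic analyticity of $L_p(s,\psi)$'' gives you continuity for each fixed $\psi$, not a rate uniform over the infinitely many quadratic characters $\chi_{T_1}$ that occur as $T$ varies. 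The paper closes this gap by invoking the Iwasawa power-series representation: when the conductor of $\chi_{T_1}\omega^{k+\alpha-r/2}$ is divisible by a prime other than $p$, one has $L_p(s,\chi_{T_1}\omega^{k+\alpha-r/2})=\Phi_{T_1}((1+p)^s-1)$ with $\Phi_{T_1}\in\Z_p\llbracket X\rrbracket$, and the integrality of the coefficients forces $v_p\bigl(L_p(1+r/2-k_m,\cdot)-L_p(1+r/2-k,\cdot)\bigr)\ge v_p\bigl((1+p)^{k_m}-(1+p)^k\bigr)$, a bound independent of $T_1$. Together with your congruences for $F^r_q$ (which already give a uniform rate, since the polynomials have $\Z$-coefficients and constant term $1$), this yields the required uniformity. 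You should add this step; without it the very existence of $\widetilde E^n_{(k,k+\alpha)}$ is not established.
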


\begin{proof}
We may suppose that $k_m=k+a(m)p^{b(m)}$ with $a(m)\equiv \alpha $ mod $(p-1)$, $b(m)\to \infty $ ($m\to \infty$). 
Let $T={\rm diag}(T_1,0)$ with $T_1\in \Lambda _{r}$ positive definite. 
Recall that   
\begin{align*}
&a_{k_m}(T)=2^{[(r+1)/2]}L(1-k_m,{\boldsymbol 1})^{-1}\left(\prod _{j=1}^{[r/2]}L(1+2i-2k_m,{\boldsymbol 1})^{-1}\right)\\
&~~~~~\times \prod _{q} F^r_q(T,q^{k_m-r-1})\times 
\begin{cases}
1 & \text{if}\ r\ \text{odd},\\
L(1+r/2-k,\chi _{T_1})  & \text{if}\ r\ \text{even}. 
\end{cases}
\end{align*}

Let $L_p(s,\varphi )$ be the Kubota--Leopoldt $p$-adic $L$-function for a primitive Dirichlet character $\varphi $. 
It is known that $L_p(s,\varphi )$ is a $p$-adic meromorphic (analytic if $\varphi \neq {\boldsymbol 1}$) function satisfying 
\begin{align}
\label{eq:Lp0}
L_p(1-s,\varphi )=-(1-\varphi \omega ^{-s}(p)p^{s-1})
\frac{B_{s,\varphi \omega ^{-s}}}{s}\qquad (1\le s\in \mathbb{Z}),  
\end{align}
where $B_{s,\chi }$ is the $s$-th generalized Bernoulli number with a character $\chi $. 
Note also that $L_p(s,{\boldsymbol 1})$ is analytic except for a pole at $s=1$. 
Then we have
\begin{align}
\label{eq:Lp}
L^{(p)}(1-s,\varphi \omega ^{-s})=L_p(1-s,\varphi) \qquad (1\le s\in \mathbb{Z}). 
\end{align}

First we show that 
\begin{align}
\label{eq:1}
\lim _{m\to \infty } L(1-k_m,{\boldsymbol 1})=L^{(p)}(1-k,\omega ^{\alpha }).
\end{align}
If we put $s=k_m$, $\varphi =\omega ^{k+\alpha }$ in (\ref{eq:Lp0}), then we have $\varphi \omega ^{-s}=\omega ^0={\boldsymbol 1}$ and therefore 
\begin{align*}
L_p(1-k_m,\omega ^{k+\alpha })=-(1-p^{k_m-1})\frac{B_{k_m}}{k_m}.
\end{align*}
Here $B_{k_m}=B_{k_m,\boldsymbol 1}$ is the $k_m$-th Bernoulli number. 
This implies 
\begin{align*}
L(1-k_m,{\boldsymbol 1})&=-\frac{B_{k_m}}{k_m}=\frac{L_p(1-k_m,\omega ^{k+\alpha })}{1-p^{k_m-1}}. 
\end{align*}
Taking a $p$-adic limit of the both sides, we obtain 
\begin{align*}
\lim _{m\to \infty }L(1-k_m,{\boldsymbol 1})&=L_p(1-k,\omega ^{k+\alpha })=L^{(p)}(1-k,\omega ^{\alpha }),  
\end{align*}
because of $k_m\to k$ ($m\to \infty$) $p$-adically. 
Here the last equality follows from (\ref{eq:Lp}). This proves (\ref{eq:1}). 
Note that $L^{(p)}(1-k,\omega ^{\alpha })\neq 0$ since $k+\alpha $ is even. 

In a similar way, we obtain 
\begin{align}
\nonumber 
&\lim _{m\to \infty } \prod _{j=1}^{[r/2]}L(1+2j-2k_m,{\boldsymbol 1})^{-1}=\prod _{j=1}^{[r/2]}L^{(p)}(1+2j-2k,\omega ^{2\alpha })^{-1}, \\
\label{eq:2}
&\lim _{m\to \infty }L_p(1+r/2-k_m,\chi _{T_1}\omega ^{k-r/2+\alpha })=L^{(p)}(1+r/2-k,\chi _{T_1}\omega ^\alpha )\\ \nonumber
&~~~~~~~~~~~~~~~~~~~~~~~~~~~~~~~~~~~~~~~~~~~~~~~(\text{for}\ r\ \text{even\ with} \ 0\le r \le n).  
\end{align}
Here, since $k\ge \frac{n+1}{2}$ implies $2j-2k<0$, the first equation above can be obtained using (\ref{eq:Lp}).  
Again, it is necessary to note that $L^{(p)}(1+2j-2k,\omega ^{2\alpha })\neq 0$ for each $j$ with $1\le j\le [r/2]$.

Next we show that 
\[\lim _{m\to \infty } \prod _{q} F^r_q(T_1,q^{k_m-r-1})=\prod _{q\neq p} F^r_q(T_1,\omega ^{\alpha }(q)q^{k-r-1}). \]
It suffices to show that 
\begin{align}
\label{eq:4}
&\lim_{m\to \infty }F^r_q(T_1,q^{k_m-r-1})=F^r_q(T_1,\omega ^{\alpha } (q)q^{k-r-1}), \\
\label{eq:5}
&\lim_{m\to \infty } F^r_p(T_1,p^{k_m-r-1})=1. 
\end{align}
Since 
\[q^{k_m-r-1}\equiv q^{a(m) p^{b(m)}}q^{k-r-1}\equiv \omega ^\alpha (q)q^{k-r-1}\bmod{p^{b(m)+1}}\]
and the constant term of $F^r_q(T_1,X)$ is $1$, 
we have 
\begin{align}
\label{eq:6}
&F^r_q(T_1,q^{k_m-r-1})\equiv F^r_q(T_1,\omega^\alpha (q)q^{k-r-1}) \bmod{p^{b(m)+1}},\\
\label{eq:7}
&F^r_p(T_1,p^{k_m-r-1})\equiv 1 \bmod{p^{b(m)+1}} \quad (\text{for}\ m\text{\ large}).
\end{align}
This shows (\ref{eq:4}), (\ref{eq:5}).

Finally, we need to check that this convergence is uniform in $T$.
The main issue is dependence on the quadratic character $\chi _{T_1}$.   
If the conductor of $\chi _{T_1}\omega ^{k-r/2+\alpha}$ is divisible by a prime $q$ other than $p$, 
then there exists a formal power series $\Phi_{T_1}(X)\in \Z_p[\![X]\!]$ such that 
$L_p(s,\chi _{T_1}\omega ^{k-r/2+\alpha })=\Phi_{T_1}((1+p)^s-1)$ (cf. Washington \cite[Theorem 7.10]{Wa}). 
By this fact and (\ref{eq:6}), (\ref{eq:7}), we see that the convergence is uniform in $T$. 
\end{proof}

\begin{Rem}
For the case of trivial character, one can avoid the use of Kubota--Leopoldt $p$-adic $L$-function 
(Kummer type congruences would be enough). 
\end{Rem}
\section{$p$-Adic limits of Siegel--Eisenstein series as classical modular forms}
\subsection{The case of quadratic character}
We identify the $p$-adic limit considered above with the (classical) Eisenstein series constructed in \S 2: 
\begin{Thm}
Let $p$ be an odd prime. 
Let $\delta =0$, $1$ and $k$ be an integer with $k\ge \frac{n+1}{2}$ such that $k+\frac{p-1}{2^{\delta }}$ is even. 
Let $\{k_m\}$ be a sequence of even integers such that 
\[k_m\to \left(k,k+\frac{p-1}{2^{\delta }}\right) \in {\boldsymbol X},\quad k_m\to \infty \quad (m\to \infty ). \]
Then we have 
\[\lim_{m\to \infty }E^{n}_{k_m}=\widetilde{E}^n_{(k,k+\frac{p-1}{2^\delta })}=
\begin{cases}
E^{n,(0)}_{k,{\boldsymbol 1}_p}(Z,k/2) & \text{if}\quad \delta =0,\\
E^{n,(0)}_{k,\chi_p}(Z,k/2) & \text{if}\quad \delta =1. 
\end{cases}
\quad (p\text{-adic\ limit})\] 
Here $E^{n,(0)}_{k,{\boldsymbol 1}_p}(Z,k/2)$ and $E^{n,(0)}_{k,\chi_p}(Z,k/2)$ are as in \S \ref{sec:Eisen}.
In particular, we have $\widetilde{E}^n_{(k,k)}\in M_{k}(\Gamma _0^{(n)}(p),{\boldsymbol 1}_p)$ and  
$\widetilde{E}^n_{(k,k+\frac{p-1}{2})}\in M_{k}(\Gamma _0^{(n)}(p),\chi _p)$. 
\end{Thm}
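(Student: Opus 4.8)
The plan is to prove the two asserted equalities separately. The first equality $\lim_{m\to\infty} E^n_{k_m} = \widetilde{E}^n_{(k,k+(p-1)/2^\delta)}$ is essentially the definition of the $p$-adic Eisenstein series, once we know the limit exists and is independent of the admissible sequence $\{k_m\}$; both facts are furnished by Proposition~\ref{Prop:p-adic_limit_FC} (which exhibits each Fourier coefficient as a genuine $p$-adic limit, with convergence uniform in $T$). So the real content is the second equality, and I would prove it by matching Fourier coefficients term by term. Since each $E^n_{k_m}$ is a holomorphic level-one form, its Fourier coefficients are supported on $T \ge 0$ and are $GL_n(\Z)$-invariant; the same therefore holds for the limit, while the classical form $E^{n,(0)}_{k,\psi}(Z,k/2)$ is holomorphic and, by the analysis of \S\ref{sec:Eisen}, supported on $T \ge 0$ with the same covariance. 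Hence it suffices to compare the coefficients at $T = \diag(T_1,0)$ with $T_1 \in \Lambda_r$ positive definite, $0 \le r \le n$.

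Next I would identify the relevant Teichm\"uller powers. For $\delta = 0$, since $(p-1)/2^\delta = p-1 \equiv 0 \pmod{p-1}$, the weight is $(k,k) \in {\boldsymbol X}$ and one applies Proposition~\ref{Prop:p-adic_limit_FC} with $\alpha = 0$, so that $\omega^\alpha = \bfon$ and $\omega^{2\alpha} = \bfon$. For $\delta = 1$ one has $\alpha = (p-1)/2$, so $\omega^\alpha = \chi_p$ (the quadratic character mod $p$) and $\omega^{2\alpha} = \omega^{p-1} = \bfon$. I would also record that the hypothesis ``$k + (p-1)/2^\delta$ is even'' is exactly the condition $\psi(-1) = (-1)^k$ needed for the classical series to exist via Theorem~\ref{thm:low weight Eisen}: for $\delta = 0$ it forces $k$ even (so $\psi = \bfon_p$), and for $\delta = 1$ it forces $k \equiv (p-1)/2 \pmod 2$, i.e.\ $\chi_p(-1) = (-1)^k$.

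With these substitutions I would insert $\omega^\alpha$ and $\omega^{2\alpha}$ into the formula of Proposition~\ref{Prop:p-adic_limit_FC} and compare it against case~(1) of Theorem~\ref{thm: F-C trivial case} (for $\delta = 0$) and of Theorem~\ref{thm: F-C nontrivial case} (for $\delta = 1$); the hypothesis $k \ge \frac{n+1}{2}$ places us precisely in case~(1) of both. The comparison then reduces to the elementary identities $L^{(p)}(s,\bfon) = \zeta^{(p)}(s)$ and $L^{(p)}(s,\chi_p) = L(s,\chi_p)$ (the latter because $\chi_p(p) = 0$), together with the parity-dependent evaluations of the floors $[(r+1)/2]$ and $[r/2]$. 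One then checks that the power of $2$, the normalizing $L$-factor in the denominator, the product of $\zeta^{(p)}$-factors, the Euler product $\prod_{q \ne p} F^r_q(T_1, \cdot)$, and the trailing factor $L^{(p)}(1+r/2-k, \chi_{T_1}\omega^\alpha)$ for $r$ even all coincide symbol-for-symbol with the corresponding ingredients in \S\ref{sec:Eisen}. The constant term ($r = 0$) equals $1$ on both sides, as it must. Since the coefficients agree at every $\diag(T_1,0)$, the two $q$-expansions are equal, yielding the second equality; the membership statements $\widetilde{E}^n_{(k,k)} \in M_k(\Gamma_0^{(n)}(p), \bfon_p)$ and $\widetilde{E}^n_{(k,k+(p-1)/2)} \in M_k(\Gamma_0^{(n)}(p), \chi_p)$ then follow from Theorem~\ref{thm:low weight Eisen}.

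I expect the main obstacle to be bookkeeping rather than conceptual: carefully tracking the parity-dependent floor functions and the character identifications so that the Proposition's formula lines up with those of \S\ref{sec:Eisen}, and confirming that the non-vanishing of the relevant $L^{(p)}$- and $\zeta^{(p)}$-values (guaranteed by the evenness condition) makes every quotient well defined. A minor point deserving a line of justification is the reduction from arbitrary positive semidefinite $T$ to the diagonal representatives $\diag(T_1,0)$, i.e.\ the $GL_n(\Z)$-invariance of both coefficient systems.
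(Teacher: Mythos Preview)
Your proposal is correct and follows essentially the same approach as the paper: apply Proposition~\ref{Prop:p-adic_limit_FC} with $\alpha=0$ (so $\omega^\alpha=\bfon$) and $\alpha=(p-1)/2$ (so $\omega^\alpha=\chi_p$), and compare term by term with case~(1) of Theorems~\ref{thm: F-C trivial case} and~\ref{thm: F-C nontrivial case}. The paper's proof is more terse, but you have supplied exactly the bookkeeping details (parity checks, $L^{(p)}(s,\chi_p)=L(s,\chi_p)$, reduction to $\diag(T_1,0)$) that it leaves implicit.
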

\begin{proof}
Keeping in mind that $\omega ^0={\boldsymbol 1}$, $\omega ^{\frac{p-1}{2}}=\chi _p$,
a comparison of the Fourier coefficients and confirms the coincidences. 
In fact, applying Proposition \ref{Prop:p-adic_limit_FC} as $\alpha =0$, $\frac{p-1}{2}$, 
we can confirm that $\widetilde{a}_{(k,k)}(T)=a(T)$ with $a(T)$ in Theorem \ref{thm: F-C trivial case} (1), 
and $\widetilde{a}_{(k,k+\frac{p-1}{2})}(T)=a(T)$ with $a(T)$ in Theorem \ref{thm: F-C nontrivial case} (1). 
\end{proof}

We compare the results of the papers \cite{Bo-Ki1,Bo-Ki3,Kat-Na}, concerning the conditions
on $p$, $k$ and $n$ necessary in the different approaches ($p$ always odd!): 

\begin{Rem}
\label{sec:comp}
The condition that $k+\frac{p-1}{2^{\delta}}$ should be even is imposed in all cases. 
Our first approach \cite{Bo-Ki1} by means of mod $p^m$ singular forms needs
$$ p>2k+1\quad \mbox{and}\quad p \,\, \mbox{regular} $$
In \cite{Bo-Ki3} we need
$$p>2k+1 \quad \mbox{and} \quad n\leq 2k+1. $$
In the present work, we need
$$k\ge \frac{n+1}{2}.$$
The motivation for considering the Siegel--Eisenstein series for $\Gamma_0^{(n)}(p)$ of weight $k$
and $U(p)$-eigenvalue 1 as a candidate comes from the first two approaches.

Finally we mention that the condition $p>2k+1$ does not appear in the work of Nagaoka \cite{Na} and Katsurada-Nagaoka \cite{Kat-Na} for the weights $1$ and $2$. 
\end{Rem} 

\subsection{The case of nonquadratic character}
We consider the case of nonquadratic character mod $p$. 
Let $\mu _{p-1}$ be the group of $(p-1)$-th root of unity in $\C^\times $ and $\chi :(\Z/p\Z)^\times \rightarrow \mu _{p-1}$ a nonquadratic character mod $p$. 
We fix an embedding $\sigma : \Q(\mu _{p-1})\hookrightarrow \Q_p$.  
For a positive integer $k$ with $k>n+1$,  
let $E^n_{k,\chi }=\sum _Ta_{k,\chi}(T)q^T$ be the Siegel--Eisenstein series of weight $k$ and degree $n$ with character $\chi $ which was studied by Takemori \cite{Ta}. 

 The following result is a generalization of Theorem 10 in Serre \cite{Se}. 
\begin{Thm}
Let $p$ be an odd prime. 
Let $k$ be a positive integer with $k>n+1$ and $\chi $ a nonquadratic character mod $p$ such that $\chi (-1)=(-1)^k$.  
We take $\alpha \in \Z$ with $1\le \alpha \le p-2$ ($\alpha \neq \frac{p-1}{2}$) such that $\chi ^\sigma =\omega ^\alpha $. 
Let $\{k_m\}$ be a sequence of even integers such that 
\[k_m\to (k,k+\alpha ) \in {\boldsymbol X},\quad k_m\to \infty \quad (m\to \infty ). \]
Then we have 
\[\lim_{m\to \infty }E^{n}_{k_m}=(E^{n}_{k,\chi })^\sigma. \] 
\end{Thm}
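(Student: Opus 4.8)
The plan is to follow the proof of the quadratic-character theorem almost verbatim, comparing Fourier coefficients term by term, but now carrying the embedding $\sigma$ through every factor. Since $\chi(-1)=(-1)^k$ and $\chi^\sigma=\omega^\alpha$ force $\omega^\alpha(-1)=(-1)^\alpha=(-1)^k$, the integer $k+\alpha$ is even, and $k>n+1$ gives $k\ge\frac{n+1}{2}$; hence Proposition \ref{Prop:p-adic_limit_FC} applies and shows that $\widetilde{E}^n_{(k,k+\alpha)}=\lim_m E^n_{k_m}$ exists with the Fourier coefficients $\widetilde{a}_{(k,k+\alpha)}(T)$ recorded there. It therefore suffices to prove
\[ \bigl(a_{k,\chi}(T)\bigr)^\sigma=\widetilde{a}_{(k,k+\alpha)}(T) \]
for every $T=\diag(T_1,0)$ with $T_1\in\Lambda_r$ positive definite. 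Both $q$-expansions are supported on positive semi-definite $T$ with constant term $1$, and the reduction of a general positive semi-definite $T$ to the form $\diag(T_1,0)$ under $\mathrm{GL}_n(\Z)$ is legitimate on both sides: the level-$1$ forms $E^n_{k_m}$ have $\mathrm{GL}_n(\Z)$-invariant coefficients, while for $E^n_{k,\chi}$ the parity condition $\chi(-1)=(-1)^k$ makes the factor $\chi(\det U)(\det U)^k$ equal to $1$ for all $U\in\mathrm{GL}_n(\Z)$.

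First I would recall Takemori's explicit formula \cite[Theorem 2.3]{Ta} for the Fourier coefficients of $E^n_{k,\chi}$: it has exactly the shape of Theorem \ref{thm: F-C nontrivial case}(1), with $\chi$ in place of $\chi_p$ and with $\chi^2$ (a nontrivial character mod $p$, since $\chi$ is nonquadratic) in place of $\chi_p^2=\bfon_p$. Thus for $r$ even
\[ a_{k,\chi}(T)=\frac{2^{r/2}\,L^{(p)}(r/2+1-k,\chi_{T_1}\chi)}{L(1-k,\chi)\prod_{j=1}^{r/2}L^{(p)}(1-2k+2j,\chi^2)}\prod_{q\ne p}F^r_q(T_1,\chi(q)q^{k-r-1}), \]
and similarly for $r$ odd, without the numerator $L$-factor and with $2^{(r+1)/2}$. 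I would then apply $\sigma$ to each factor. From $\chi^\sigma=\omega^\alpha$ and the fact that $\chi_{T_1}$ is $\{\pm 1\}$-valued, hence fixed by $\sigma$, one obtains $(\chi^2)^\sigma=\omega^{2\alpha}$ and $(\chi_{T_1}\chi)^\sigma=\chi_{T_1}\omega^\alpha$. Because $k>n+1\ge r+1$, each of the arguments $1-k$, $1-2k+2j$ and $r/2+1-k$ is a non-positive integer, so these are critical values equal to $-B_{m,\psi}/m$ for the relevant $m$; the classical Galois-equivariance $B_{m,\psi}^\sigma=B_{m,\psi^\sigma}$ gives $L(1-m,\psi)^\sigma=L(1-m,\psi^\sigma)$. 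Since the Katsurada polynomial $F^r_q(T_1,X)$ lies in $\Q[X]$ \cite{Kat} and $\chi(q)^\sigma=\omega^\alpha(q)$, we also get $F^r_q(T_1,\chi(q)q^{k-r-1})^\sigma=F^r_q(T_1,\omega^\alpha(q)q^{k-r-1})$. Finally, $\chi$ and $\chi^2$ are nontrivial mod $p$, so their $p$-Euler factors are trivial ($L=L^{(p)}$) and the removal of the $p$-factor commutes with $\sigma$; assembling the factors turns $(a_{k,\chi}(T))^\sigma$ into precisely $\widetilde{a}_{(k,k+\alpha)}(T)$.

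The hard part is the Galois-compatibility bridge between the complex special values attached to $E^n_{k,\chi}$ and the $p$-adic values produced by Kubota--Leopoldt interpolation in Proposition \ref{Prop:p-adic_limit_FC}: in the quadratic case $\sigma$ acted trivially, whereas here it genuinely permutes the values of $\chi\in\mu_{p-1}$. The clean way around this is to route everything through generalized Bernoulli numbers, which are algebraic and Galois-equivariant, and to exploit that $\alpha\ne 0,\frac{p-1}{2}$ makes both $\omega^\alpha$ and $\omega^{2\alpha}$ nontrivial, so each $L^{(p)}$-factor coincides with an honest $L$-value and is nonzero---the parity $k+\alpha$ even being exactly the nonvanishing condition already noted in Proposition \ref{Prop:p-adic_limit_FC}. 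The one genuinely delicate point is the bookkeeping for the mixed character $\chi_{T_1}\chi$, whose conductor may or may not be divisible by $p$; in either case its value at the negative integer $r/2+1-k$ transforms correctly under $\sigma$ and matches the factor $L^{(p)}(1+r/2-k,\chi_{T_1}\omega^\alpha)$ in Proposition \ref{Prop:p-adic_limit_FC}. Once these compatibilities are in place the coefficient identity is immediate, and the equality of $q$-expansions identifies $\lim_m E^n_{k_m}$ with the classical form $(E^n_{k,\chi})^\sigma$.
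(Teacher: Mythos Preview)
Your proposal is correct and follows exactly the paper's approach: apply Proposition~\ref{Prop:p-adic_limit_FC} (which is legitimate since $k+\alpha$ is even and $k>n+1\ge\frac{n+1}{2}$) and compare the resulting Fourier coefficients with Takemori's explicit formula, matching factor by factor after pushing everything through $\sigma$. The paper's proof is a one-line ``we can check easily'' pointing to \cite[Theorem~1.1]{Ta}; you have simply unpacked that check, correctly isolating the Galois-equivariance of generalized Bernoulli numbers and the nontriviality of $\omega^\alpha$ and $\omega^{2\alpha}$ (from $\alpha\ne 0,\tfrac{p-1}{2}$) as the points that make the $\sigma$-twisted $L$-values line up with the $L^{(p)}$-values in Proposition~\ref{Prop:p-adic_limit_FC}.
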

\begin{proof}
We can check easily that the coefficients $\widetilde{a}_{(k,k+\alpha )}(T)$ given in Proposition \ref{Prop:p-adic_limit_FC} coincide with the Fourier coefficients in \cite[Theorem 1.1]{Ta}. 
\end{proof}

%%%%%%%%%%%%%%%%%%%%%%%%%%%%
\section*{Acknowledgment}
The second author was supported by JSPS KAKENHI Grant Number 24K06650, 
the third author was supported by JSPS KAKENHI Grant Number 22K03259.
%%%%%%%%%%%%%%%%%%%%%%%%%%%%%%%%%%

%\bibliographystyle{amsplain}
%\bibliography{texref}

\begin{thebibliography}{10}

%\bibitem{Bo-Ki3}
%S.~B\"ocherer, T.~Kikuta, On mod $p$ singular modular forms. Forum Math. 28 (2016), no. 6, 1051-1065.

%\bibitem{Bo-Ki2}
%S.~B\"ocherer, T.~Kikuta, Structure theorem for mod $p^m$ singular forms. arXiv:2302.00309 [math.NT]. 

\bibitem{Bo-Ki1}
S.~B\"ocherer, T.~Kikuta, On $p$-adic Siegel--Eisenstein series from a point of view of the theory of mod $p^m$ singular forms, Manuscripta Math. \textbf{175} (2024), no. 1-2, 203--226. 

\bibitem{Bo-Ki3}
S.~B\"ocherer, T.~Kikuta, On $p$-adic Siegel--Eisenstein series II: How to avoid the regularity condition on $p$, arXiv:2502.06799 [math.NT].

%\bibitem{Bo-Sch} S. ~B\"oherer, C.-G.~Schmidt, $p$-adic measures attached to Siegel modular forms, Ann. Inst. Fourier (Grenoble), 50(2000) no. 5 1375-1443.


\bibitem{Gu}
K.~Gunji,  
On the functional equations of Siegel Eisenstein series of an odd prime level $p$, arXiv:2306.10696 [math.NT].

\bibitem{Kat}
H.~Katsurada, 
An explicit formula for Siegel series, Amer. J. Math. \textbf{121} (1999), no. 2, 415--452.  
 
\bibitem{Kat-Na}
H.~Katsurada, S.~Nagaoka, 
On $p$-adic Siegel Eisenstein series, J. Number Theory \textbf{251} (2023), 3--30. 

%\bibitem{Kat-Nacorr}
%H.~Katsurada, S.~Nagaoka, 
%Corrigendum to ``On $p$-adic Siegel Eisenstein series'' [J. Number Theory 251 (2023) 3–30].
%J. Number Theory 259 (2024), 419--421.

\bibitem{Miz} 
S.~Mizumoto, Eisenstein series for Siegel modular groups, Math. Ann. \textbf{297} (1993), no. 4, 581--625.


\bibitem{Na}
S.~Nagaoka, A remark on Serre's example of $p$-adic Eisenstein series, Math. Z. \textbf{235} (2000), no. 2, 227--250. 

\bibitem{Se}
J.-P.~Serre, Formes modulaires et fonctions z$\hat{\text{e}}$ta $p$-adiques, 
Modular functions of one variable, III (Proc. Internat. Summer School, Univ. Antwerp, Antwerp, 1972), pp. 191--268, 
Lecture Notes in Math., Vol. 350, 
Springer-Verlag, Berlin-New York, 1973

\bibitem{Shi1} G. Shimura, Confluent hypergeometric functions on tube domains, Math. Ann. \textbf{260} (1982), no. 3, 269--302.


\bibitem{Shi2} G. Shimura, Euler products and Fourier coefficients of automorphic forms on symplectic groups, Invent. Math. \textbf{116} (1994), no. 1-3, 531--576.


\bibitem{Ta}
S.~Takemori, Siegel Eisenstein series of degree $n$ and $\Lambda $-adic Eisenstein series,
J. Number Theory \textbf{149} (2015), 105--138.

\bibitem{Wa}
L.~C.~Washington, 
Introduction to cyclotomic fields, 
Second edition,
Grad. Texts in Math., 83,
Springer-Verlag, New York, 1997. xiv+487 pp.
ISBN:0-387-94762-0


\end{thebibliography}

\section*{Conflict of interest statement}
On behalf of all authors, the corresponding author states that there is no conflict of interest.

\section*{Data availability} 
During the work on this publication, no data sets were generated, used
or analyzed. Thus, there is no need for a link to a data repository.

\providecommand{\bysame}{\leavevmode\hbox to3em{\hrulefill}\thinspace}
\providecommand{\MR}{\relax\ifhmode\unskip\space\fi MR }
% \MRhref is called by the amsart/book/proc definition of \MR.
\providecommand{\MRhref}[2]{%
  \href{http://www.ams.org/mathscinet-getitem?mr=#1}{#2}
}
\providecommand{\href}[2]{#2}

\begin{flushleft}
Siegfried B\"ocherer\\
Kunzenhof 4B \\
79177 Freiburg, Germany \\
Email: boecherer@t-online.de
\end{flushleft}

\begin{flushleft}
Keiichi Gunji \\
Department of Mathematics\\
Chiba Institute of Technology\\
2-1-1 Shibazono, Narashino, Chiba 275-0023, Japan\\
Email: keiichi.gunji@it-chiba.ac.jp
\end{flushleft}

\begin{flushleft}
  Toshiyuki Kikuta\\
  Faculty of Information Engineering\\
  Department of Information and Systems Engineering\\
  Fukuoka Institute of Technology\\
  3-30-1 Wajiro-higashi, Higashi-ku, Fukuoka 811-0295, Japan\\
  E-mail: kikuta@fit.ac.jp
\end{flushleft}

\end{document}